\DeclareMathAlphabet{\pazocal}{OMS}{zplm}{m}{n}
\DeclareMathAlphabet\mathbfcal{OMS}{cmsy}{b}{n}
\def\path{\mathop{\hbox{\rm Path}}}
\def\obj{\mathop{\text{\rm Obj}}}
\def\L#1{\displaystyle\lim_{\leftarrow}#1}
\def\Z{\mathbb{Z}}
\def\complex{\mathbb{C}}
\def\N{\mathbb{N}}
\def\grp{\mathop{\text{\bf Grp}}}
\def\set{\mathop{\text{\bf Set}}}
\def\F{\mathfrak F}
\def\CC{\mathfrak C}
\def\U{\mathcal U}
\def\aut{\mathop{\text{Aut}}}
\def\a{\alpha}
\def\B{\mathcal{B}}
\def\l{\lambda}
\def\m{\mu}
\def\t{\tau}
\def\s{\sigma}
\def\w{\omega}
\def\vec{\mathbf{v}}
\def\wec{\mathbf{w}}
\def\Q{\mathbb{Q}}
\def\A{\mathcal{A}}
\newcommand{\LI}{{\rm 2LI}}
\newcommand{\E}{\mathbfcal{E}}
\def\remove#1{}
\def\car{\mathop{\text{\rm char}}}
\def\cat{\text{\bf Cat}}
\def\I{\mathfrak{I}}
\def\mor{\mathop{\text{\rm Mor}}}
\def\C{\mathcal{C}}
\def\iu{\hbox{\bf i}}
\DeclareMathOperator{\Span}{span}
\def\graph{\text{\bf Grph}}
\def\diag{\text{\rm Diag}}
\newtheorem{lemma}{Lemma}[section]
\newtheorem{corollary}[lemma]{Corollary}
\newtheorem{theorem}[lemma]{Theorem}
\newtheorem{proposition}[lemma]{Proposition}
\theoremstyle{remark}
\newtheorem{observation}[lemma]{Observation}
\theoremstyle{definition}
\newtheorem{definition}[lemma]{Definition}
\newtheorem{example}[lemma]{Example}
\newtheorem{notation}[lemma]{Notation}
\newtheorem{remark}[lemma]{Remark}
\newcolumntype{R}[1]{>{\arraybackslash$}p{#1}<{$}}
\title{Solenoids in automorphism groups of evolution algebras}
\author[Y. Cabrera]{Yolanda Cabrera Casado}
\address{Y. Cabrera Casado: Departamento de Matem\'atica Aplicada, E.T.S. Ingenier\'\i a Inform\'atica, Universidad de M\'alaga, Campus de Teatinos s/n. 29071 M\'alaga.   Spain.}
\email{yolandacc@uma.es}
\author[M. I. Gon\c calves]{Maria Inez Cardoso Gon\c calves}
\address{M. I. Cardoso Gon\c calves: Departamento de Matem\'atica, Universidade Federal de Santa Catarina, Florian\'opolis, SC, 88040-900 - Brazil}
\email{maria.inez@ufsc.br}
\author[D. Gon\c calves]{Daniel Gon\c calves}
\address{D. Gon\c calves: Departamento de Matem\'atica, Universidade Federal de Santa Catarina, Florian\'opolis, SC, 88040-900 - Brazil}
\email{daemig@gmail.com}
\author[D. Mart\'\i n]{Dolores Mart\'\i n Barquero}
\address{D. Mart\'\i n Barquero: Departamento de Matem\'atica Aplicada, Escuela de Ingenier\'\i as Industriales, Universidad de M\'alaga, Campus de Teatinos s/n. 29071 M\'alaga.   Spain.}
\email{dmartin@uma.es}
\author[C. Mart\'\i n]{C\'andido Mart\'\i n Gonz\'alez}
\address{C. Mart\'\i n Gonz\'alez: Departamento de \'Algebra Geometr\'{\i}a y Topolog\'{\i}a, Fa\-cultad de Ciencias, Universidad de M\'alaga, Campus de Teatinos s/n. 29071 M\'alaga.   Spain.}
\email{candido\_m@uma.es}
\author[I. Ruiz]{Iv\'an Ruiz Campos}
\address{I. Ruiz Campos:  Departamento de \'Algebra Geometr\'{\i}a y Topolog\'{\i}a, Fa\-cultad de Ciencias, Universidad de M\'alaga, Campus de Teatinos s/n. 29071 M\'alaga. Spain.}
\email{ivaruicam@uma.es}
\subjclass[2020] {05C25, 08A35, 17A36, 17A60, 17D92.} 
\keywords{Evolution algebra, graphs, automorphism, solenoid.}
\begin{document}

\begin{abstract}

Let $A$ be an evolution algebra (possibly infinite-dimensional)  equipped with a fixed natural basis $B$, and let $E$ be the associated graph defined by Elduque and Labra. 
 We describe the group of automorphisms of $A$ that are diagonalizable with respect to $B$. This group arises as the inverse limit of a functor (a diagram) from the category associated with the graph $E$ to the category of groups. In certain cases, this group can be realized as a dyadic solenoid. Additionally, we investigate the automorphisms that permute (and possibly scale) the elements of $B$. In particular, for algebras satisfying the 2LI condition, we provide a complete description of their automorphism group.
\end{abstract}
\maketitle
\section{Introduction}

One way to think about the mathematics underlying evolution algebras is through the formalization of asexual reproduction. In sexual reproduction, two organisms combine to produce a third one, whereas in asexual reproduction, a single organism produces offspring genetically similar to the parent. 
Thus, sexual reproduction can be schematized as an interaction in which organisms \(x\) and \(y\) couple to produce \(z\).
This process can be thought of as a multiplication \(xy = z\). 
On the other hand, asexual reproduction involves only one parent. So, instead of two parents (\(xy\)), we have a single parent squared (\(x^2\)). Since the offspring is genetically similar to \(x\), the resulting equation is \(x^2 = x\).

\begin{figure}[h]
    \centering
  \[
\begin{tikzcd}[column sep=small, row sep=small]
x \arrow[rd, no head] &                   & y \arrow[ld, no head] \\
                      &  \arrow[d] &                       \\
                      & z                 &
\end{tikzcd}
\hspace{1cm}
\begin{tikzcd}
  x^2 \arrow[d] \\
  x
\end{tikzcd}
\]

    \caption{On the left side we have the scheme of sexual reproduction and on the right side the asexual one.}
    \label{fig_esquemasrep}
\end{figure}
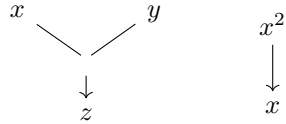

Now, if we embed these ideas in an algebra setting, sexual reproduction gives us equations of the form
$e_ie_j=\sum_{k}\omega_{ijk}e_k$, where the $e_i$'s represent the different types of parent organisms,
and the scalars $\omega_{ijk}$ correspond to probabilities. 
In contrast, asexual reproduction leads to equations of the form $e_i^2=e_i$, which characterize a well-known class of evolution algebras. However, if we take into account the possibilities of mutations, errors, or perturbations from the ideal model (described by $e_i^2=e_i$), we obtain a more realistic model given by
$e_i^2=\sum_k \omega_{ki}e_k$,  where, in principle, the scalar $\omega_{ii}$ is close to $1$ while the others are close to $0$. 
If the deviation from the ideal model becomes significant, we arrive at the concept of a general evolution algebra, where there are no constraints requiring
$\omega_{ii}$ to be close to $1$ and the others to be close to $0$. Notice that within the context of asexual reproduction,  the product $e_ie_j$ for distinct $i$ and $j$ should be zero, as no sexual reproduction occurs in this scheme.

It turns out that evolution algebras are non-associative and fundamentally differ from traditional algebraic structures: not only does each basis element square to a linear combination of other basis elements, but the product of any two distinct basis elements is zero. This unique property makes evolution algebras particularly useful for studying dynamical systems, Markov processes, and population genetics.

The first comprehensive reference on evolution algebras is the book by Jianjun Paul Tian \cite{Tianbook}, which introduces the theory, explores its algebraic properties, and discusses classification and applications in biology and physics. The book presents key concepts of the theory and demonstrates their relevance in various mathematical and scientific contexts.

Following \cite{Tianbook}, numerous contributions to the theory of evolution algebras have appeared in the literature,  for example \cite{Elduque2}, \cite{casas}, and  \cite{csv1}. Evolution algebras exhibit deep connections with graph theory and stochastic processes, making them a versatile interdisciplinary tool. They offer valuable insights into non-Mendelian inheritance mechanisms and find applications in mathematical ecology, epidemiology, and theoretical physics. See \cite{ceballos} for a comprehensive background on the state of the art of this type of non-associative algebra.
The interactions between evolution algebras and graph theory, as established in \cite{Elduque2}, allow the description of the affine group scheme of automorphisms of finite-dimensional perfect evolution algebras, as presented in \cite{Elduque}. This result directly precedes our current work. Additionally, in \cite{Hopf}, the authors determine the Hopf algebras that represent the affine group scheme of automorphisms of two-dimensional evolution algebras and explore their relation to universal associative and commutative representations of these algebras. 

The solenoid is a classic object in topology and dynamics, first introduced by Vietoris and later studied more systematically by van Dantzig and others. While the term solenoid is sometimes used to describe various kinds of topological or algebraic structures, one of the most well-studied and structurally rich is the dyadic solenoid, a one-dimensional compact connected abelian group that arises as the inverse limit of circles under degree-two covering maps.  More generally, a solenoid is a compact connected topological space (i.e., a continuum) that may be obtained as the inverse limit of an inverse system of topological groups and continuous homomorphisms. They naturally arise as minimal sets in certain dynamical systems, especially as attractors in flows on 3-manifolds or in suspension flows over symbolic systems. 

This work describes the automorphism group of certain evolution algebras as solenoids. This suggests that the symmetries and internal structure of the algebra are far from trivial, showing a kind of nested, self-similar symmetry, as solenoids are constructed from repeated circle covers.

We begin by investigating diagonalizable automorphisms—those that diagonalize relative to a fixed natural basis—without assuming finite-dimensionality or perfection. 
We find that the group of such automorphisms (once a natural basis is chosen) can be characterized as the inverse limit of a diagram (see Definition \ref{notorius}) arising from the graph associated to the evolution algebra in question. 
In the second part of our work, we focus on automorphisms that both permute and scale the elements of the chosen natural basis.

The paper is organized as follows. We begin with a preliminary section, where we recall relevant concepts and set up notation concerning evolution algebras, graphs and their associated categories, inverse limits, and solenoids. In Section~\ref{sec_diagonaliable_automorphism}, we study diagonalizable automorphisms of evolution algebras, i.e., those that scale the basis elements. { In Example~\ref{diomucho}, we identify the automorphisms with Tate modules.}
Moreover, we present examples where the group of diagonalizable automorphisms coincides with a solenoid and prove a general theorem characterizing this group as the inverse limit of a certain functor (see Theorem~\ref{arop}).

Section~\ref{sec_deviation_finitedimensional} focuses on the contrasting behavior of automorphism groups in finite versus infinite-dimensional evolution algebras. For perfect evolution algebras, we explore properties such as nondegeneracy, finiteness of the automorphism group, and invertibility of the structure matrix.

Before addressing non-diagonalizable automorphisms, Section~\ref{sec_2LI} presents an interlude in which we study evolution algebras satisfying the 2LI Condition. In particular, we describe the relationships between natural bases of such algebras (see Theorem~\ref{2LI}).

We conclude in Section~\ref{sec_non_diagonalizable} with a study of non-diagonalizable automorphisms. We introduce weighted graphs and show that weighted graphs satisfying Condition~(Sing) correspond bijectively to pairs $(A,B)$, where $A$ is an evolution algebra and $B$ is a natural basis. This correspondence is category theoretic and allows us to analyze the automorphisms of an evolution algebra with a fixed basis as automorphisms of a weighted graph (see Proposition~\ref{weenoh}). We then show that an automorphism of a weighted graph induces an inverse limit, which can be naturally embedded into the automorphism group of $A$. In Theorem~\ref{pescaito}, we describe the union of all such automorphisms as a semidirect product of the diagonalizable automorphisms of $A$ (with respect to a basis $B$) and the automorphisms of the associated weighted graph. We end the paper by presenting methods for computing inverse limits arising from automorphisms of evolution algebras.

\section{Preliminaries}\label{sec_preliminaries}
In what follows, we will denote the natural numbers without zero by $\N^*$. We will use the notation $C_n=\Z/n\Z$ for the cyclic group of order $n\ge 1$. This will prevent confusion with the group
of $p$-adic integers $\Z_p$ defined as the inverse limit of the system $\{C_{p^n}\}_{n\ge 1}$ with connecting homomorphism induced by $C_{p^{n+1}}\to C_{p^n}$ such that $\bar k\mapsto \bar j$, where
$j$ is the remainder of the division of $k$ by $p^n$.

An algebra $A$ over a field $K$ is considered an \emph{evolution algebra} if there exists a basis $B=\{e_i\}_{i\in \Lambda}$ such that $e_ie_j=0$ for every $i, j \in \Lambda$ with $i\neq j$. Such a basis is called a \emph{natural basis}. Denote by $M_B=(\omega_{ij})$ the \emph{structure matrix} of $A$ relative to $B$, where $e_i^2 = \sum_{j\in \Lambda} \omega_{ji}e_j$.  

A {\it directed graph} is a quadruple $E=(E^0, E^1,s,r)$ where $E^0, E^1$ are sets and $s,r\colon E^1\to E^0$ are maps (called source and range, respectively). We will use the terms graph and directed graph interchangeably.  The elements of $E^0$ are called \emph{vertices}, and the elements of 
$E^1$ are called \emph{edges} of $E$. A \emph{path} $\mu$ of length $m$ is a finite chain of edges $\mu=f_1\ldots f_m$ such that $r(f_i)=s(f_{i+1})$ for $i=1,\ldots,m-1$. The vertices will be considered trivial paths, namely, paths of length zero. If $f\in E^1$ is such that $r(f)=s(f)$, then we say that $f$ is a loop and the vertex $r(f)$ is the basis of the loop.
If $S\subset E^0$, then denote by $T(S)$ the \emph{tree} of $S$ where $$T(S)=\{v\in E^0 \colon \text{ exist }\lambda \in \text{Path}(E) \text{ and } u \in S \text{ with } s(\lambda)=u, r(\lambda)=v\}.$$

A graph $E$ satisfies Condition~(Sing) if for every two vertices $u,v \in E^0$, we have $|s^{-1}(u) \cap r^{-1}(v)|\leq 1$.

If $A$ is an evolution algebra with natural basis $B=\{e_i\}_{i\in \Lambda}$ and structure matrix $M_B=(\omega_{ij})$, then we denote by $E=(E^0, E^1, r_E, s_E)$ the \emph{directed graph associated to $A$ relative to $B$}, which is defined by setting $E^0=\{e_i\}_{i \in \Lambda}$ and 
drawing an edge from $e_j$ to $e_i$ if and only if $\w_{ij}\ne 0$, see \cite{Elduque2}. 
Following \cite{maclane}, 
For a category $\C$, we will denote the classes of objects and morphisms by $\obj(\C)$ and $\mor(\C)$, respectively. 
Next, we define several categories that we will use. By $\cat$ we mean the category of small categories and functors, and by $\grp$ we denote the category of groups.

The category of graphs $\graph$ is defined in \cite[II, sect. 7, p. 48]{maclane} (in the terminology of \cite{maclane}, our graphs are termed "small graphs").

Let $E$ be a graph. We recall the construction of the free category generated by the graph $E$, as in \cite{maclane}:  $\I_E$ is a small category such that $\obj(\I_E)=E^0$ and $\mor(\I_E)=\cup_{u,v\in E^0} \hom_{\I_E}(u,v)$, where for $u,v\in E^0$ we define $\hom_{\I_E}(u,v)$ as the set of all paths with source $u$ and range $v$.  Thus, we have a functor $\I\colon\graph\to\cat$ such that $E\mapsto \I_E$. On the other hand, there is
a forgetful functor $U\colon\cat\to\graph$ given by $U(\C)=(\obj(\C),\mor(\C),s,r)$, where for any  morphism $f\in\mor(\C)$ with  $f\in\hom_\C(X,Y)$ we put $s(f)=X$ and $r(f)=Y$. As shown in \cite[II, sect. formula (6), p.51]{maclane} we can say that the functor $\I$ is the left adjoint of the forgetful functor $U$. This implies that there is a bijection 
\begin{equation}\label{noitcnujda}
\hom_\graph(E,U(\C))\cong\hom_\cat(\I_E,\C)
\end{equation}
for any graph $E$ and small category $\C$. Note that there may exist morphisms in $\I_E$ that are not edges of the graph $E$ (for instance, a path of length greater than one). The main use of \eqref{noitcnujda} is that 
one can define a functor $\I_E\to\C$ simply by giving a graph homomorphism $E\to U(\C)$. We will use this fact in the sequel without further warning. In conclusion, one can associate a category to any graph. Hence, we can associate a category to each pair formed by an evolution algebra and a natural basis (via its associated graph).

\begin{definition}   \label{tonta}
We define the small category $\C_0$ as the category whose unique object is $\obj(\C_0)=\{K^{\times}\}$ and $\mor(\C_0)=\{\varphi:K^\times \to K^\times \vert \,  \varphi \,\text{ is a group homomorphism}\}$. 
\end{definition}

\begin{remark}   \label{atnot} 
Notice that $\C_0$ is a subcategory of $\grp$. So any
functor from any category $\mathcal{A}$ to $\C_0$, induces a functor 
$\mathcal{A}\to\grp$.
\end{remark}

 \begin{definition}\label{def_cone}
     Consider a small category $I$ and $\F\colon I\to\C$ a functor with values in a category $\C$. Recall that a {\it  cone} for the functor $\F$ is
an object $G$ in $\C$ and a collection $\{t_i\}_{i\in I}$ of homomorphisms in $\C$ with  
$t_i\colon G\to\F(i)$ such that the triangles
\begin{center}
\begin{tikzcd}[cramped, column sep=small]
G \arrow{r}{t_i}  \arrow[rd,"t_j"'] 
  & \F(i) \arrow{d}{\F(a)} & i\arrow[d,"a"]\\
    & \F(j) & j
\end{tikzcd}
\end{center}
commute for any arrow $a\colon i\to j$ in $I$. 
 \end{definition}
\begin{observation}
The set $\{\F(i)\}_{i \in I}$ can be thought as a family of objects indexed by $I$ and, for each arrow $a \colon i\to j$ in $I$, the
morphism $\F(a) \colon \F(i)\to\F(j)$ satisfies $\F(a) \circ t_i = t_j$. 
\end{observation}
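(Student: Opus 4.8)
The plan is to recognize that the Observation merely re-expresses the content of Definition~\ref{def_cone} in a slightly more hands-on form, so the ``proof'' amounts to translating between the diagrammatic and the equational formulations. First I would recall that a cone for a functor $\F\colon I\to\C$ consists, by Definition~\ref{def_cone}, of an object $G$ of $\C$ together with a collection $\{t_i\}_{i\in I}$ of morphisms $t_i\colon G\to\F(i)$ such that, for every arrow $a\colon i\to j$ of $I$, the triangle with vertices $G$, $\F(i)$, $\F(j)$ and edges $t_i$, $t_j$, $\F(a)$ commutes.

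The key (and essentially only) step is to read off what commutativity of that triangle says: the two directed paths from $G$ to $\F(j)$ are the single edge $t_j$ and the composite $\F(a)\circ t_i$, so commutativity is exactly the identity $\F(a)\circ t_i = t_j$, which is the assertion of the Observation. The remaining clause --- that $\{\F(i)\}_{i\in I}$ is a family of objects of $\C$ indexed by $I$, equipped for each arrow $a\colon i\to j$ of $I$ with a transition morphism $\F(a)\colon \F(i)\to\F(j)$ --- is nothing more than a restatement of the fact that $\F$ is a functor from $I$ to $\C$.

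Since each step is a direct unwinding of the definitions, there is no genuine obstacle; the only point requiring a bit of care is to keep the direction of all the arrows straight (the $t_i$ emanate from $G$, and the $\F(a)$ run with the arrows of $I$), so that what one obtains is a cone --- a ``compatible family'' of maps \emph{into} the objects $\F(i)$ --- rather than a cocone. I would record this reformulation precisely because it is the shape in which the inverse limit of $\F$ will subsequently be characterized as a universal such cone.
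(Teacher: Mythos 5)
Your proposal is correct and matches the paper, which offers no separate argument for this Observation precisely because it is, as you say, a direct unwinding of Definition~\ref{def_cone}: the commuting triangle there is literally the identity $\F(a)\circ t_i=t_j$, and the indexed family of objects with transition morphisms is just the functoriality of $\F$. Nothing further is needed.
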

 
Now, we can consider the
\emph{category of cones for a fixed functor $\F$}: its objects are the cones for $\F$ and if $(G,\{t_i\}_{i\in I})$ and $(G',\{t'_i\}_{i\in I})$  are cones for $\F$, then a homomorphism from $(G,\{t_i\}_{i\in I})$ to $(G',\{t'_i\}_{i\in I})$ is a homomorphism $\varphi\colon G\to G'$ making commutative the triangles 
\begin{center}
\begin{tikzcd}[cramped, column sep=tiny]
G \arrow[dr,"t_i"'] \arrow[rr,"\varphi"] & & G' \arrow[dl,"t'_i"]\\
& \F(i) & 
\end{tikzcd}
\end{center}
for any $i\in I$. 

\begin{definition}\label{def_inverse_limit}
A \emph{inverse limit of the functor} $\F$
(denoted $\displaystyle\lim_{\leftarrow}\F$) It is a terminal object in the category of cones for $\F$. This means that $\L{\F}$ is an object in $\C$ endowed
with homomorphisms $t_i\colon\L \F\to\F(i)$ for any $i\in I$, such that
\begin{enumerate}
    \item $\F(a)t_i=t_j$ for any $a\colon i\to j$ in $I$.
    \item For any other $H$ endowed with homomorphisms $s_i\colon H\to \F(i)$ satisfying $\F(a)s_i=s_j$ when $a\colon i\to j$ in $I$, there is a unique homomorphism $\theta\colon H\to \L{\F}$ such that $t_i \circ \theta=s_i$ for any $i$.
\end{enumerate}
\begin{center}
\begin{tikzcd}[row sep=tiny]
\L{\F}\arrow[dr,"t_i"] \arrow[drr,controls={+(1.5,0.5) and +(-1,0.8)}, "t_j"]&       &     \\
   & \F(i)\arrow{r}{\F(a)} & \F(j).\\
H \arrow[ur,"s_i"] \arrow[uu,dashed,"\theta"] \arrow[urr,controls={+(1.5,-0.5) and +(-0.5,-0.5) }, "s_j"']&       &   
\end{tikzcd}
\end{center}
\emph{Direct limits} are defined dually by using \emph{cocones}.
\end{definition}

See \cite[Sect. 4, \S III]{maclane} for a general reference on categories and related notions.

\begin{remark}\label{remark_lolitienesueño}
The inverse limit $\L{\F}$ for a functor $\F\colon I\to\set$ can be constructed as the set of all $(x_i)_{i\in I}\in\prod_{i\in I}\F(i)$ such that $\F(a)(x_i)=x_j$, whenever $a\colon i\to j$ in $I$. The map $t_j \colon \L \F \to \F(j)$ is given by $t_j((x_i)_{i\in I}) = x_j$. For a functor $\F\colon I\to\grp$, the inverse limit can be constructed analogously.
\end{remark}

An interesting example is the so-called dyadic solenoid. This is a construction that appears naturally in our study of automorphisms of
evolution algebras.

\begin{example}\label{example_solenoide}
    Consider the category $\I_E$, where $E$ is the graph whose vertices are the natural numbers $0,1,\ldots$ and the only edges are $i+1\to i$, see Figure \ref{fig_example_solenoide}.
    \begin{figure}[h]
        \begin{equation*}\cdots\to \overset{i+1}{\bullet}\to \overset{i}{\bullet}\cdots \overset{1}{\bullet}\to \overset{0}{\bullet}\end{equation*}
        \caption{Graph $E$ of example \ref{example_solenoide}.}
        \label{fig_example_solenoide}
    \end{figure}

Let ${\bf n}=(n_1,n_2,\ldots)$ be a sequence of positive integers and consider the functor $\F\colon \I_E\to\grp$ such that $\F(i)=S^1$ the unit circle and $\F(i)\to \F(i-1)$ is the map $x\mapsto x^{n_i}$. This defines an inverse system of groups 
\begin{equation}\label{box}
\cdots\to S^1\buildrel{f_3}\over{\to} S^1 \buildrel{f_2}\over{\to} S^1\buildrel{f_1}\over{\to} S^1,
\end{equation}
where we have abbreviated $f_i:x\mapsto x^{n_i}$. Then
we define $\Sigma_{\bf n}:=\L{\F} $ to be the inverse limit (as a topological group). This topological space $\Sigma_{\bf n}$ is called the \emph{${\bf n}$-adic solenoid}. For the special case ${\bf n}=(2,2,\ldots)$ we get the so-called
\emph{dyadic solenoid}.
\end{example}

Dyadic solenoids were first introduced in \cite{V} and in \cite{D} for all constant sequences. Solenoids are compact connected topological spaces. However, they are not locally connected or path connected. Of course, they are abelian groups and are examples of what is known as a \emph{protorus} (compact connected topological abelian group). Recalling the definition of inverse limit, a materialization of $\Sigma_{\bf n}$ for ${\bf n}=(2,2,\ldots)$, is the group of all
sequences $(x_0,x_1,\ldots )$ with $x_i\in S^1$ and $x_{i+1}^2=x_i$ for all $i$ (with the componentwise product). As it is well known, solenoids are not Lie groups since connected Lie groups are path-connected.
Instead of using the Lie group $S^1$, we can consider the algebraic group $K^\times$, the graph $E$ above, and ${\bf n}=(n_1, n_2,\ldots)$ fixed. Then,
we can define the functor $\F\colon \I_E\to\grp$ such that $\F(i)=K^\times$ and the map $\F(i)\to\F(i-1)$ is $x\mapsto x^{n_i}$. This defines an inverse system 
\begin{equation}\label{box2}
\cdots\to K^\times\buildrel{f_3}\over{\to} K^\times \buildrel{f_2}\over{\to} K^\times\buildrel{f_1}\over{\to} K^\times,
\end{equation}
of groups and the group  
$\L{\F}$ is what we term as a \emph{generalized solenoid}.

\begin{figure}[h]
    \centering
    \includegraphics[width=0.4\linewidth]{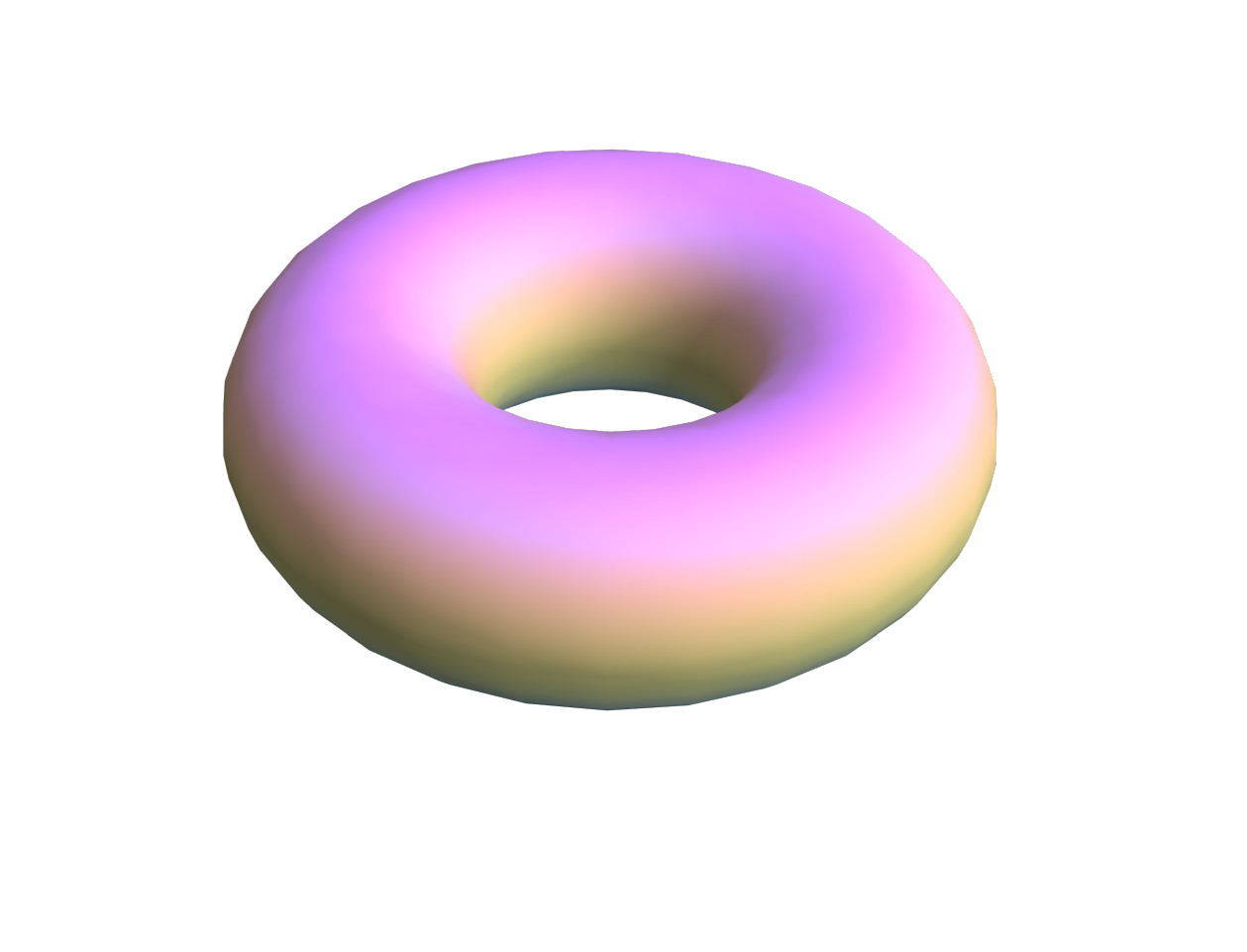}
    \caption{\small Drawn with \cite{Mathematica}.}
    \label{fig_donut}
\end{figure}

\begin{figure}[h]
    \centering
    \includegraphics[width=0.4\linewidth]{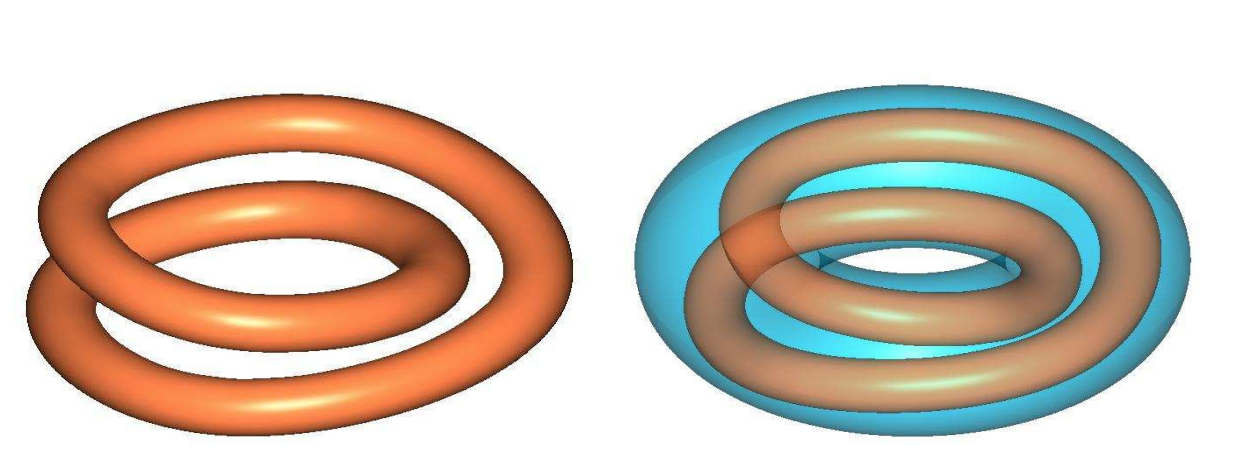}
    
    \includegraphics[width=0.4\linewidth]{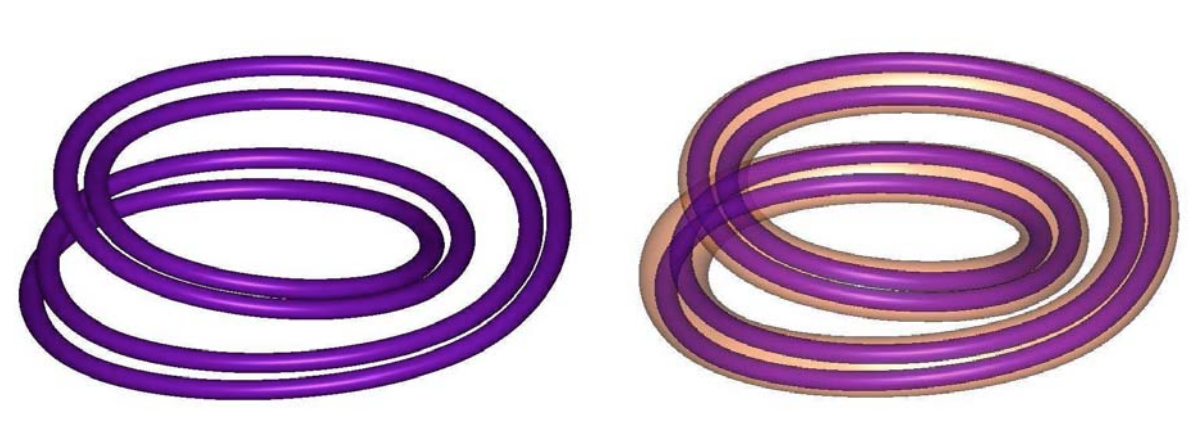}
    \caption{\small See \cite{selenoide}.}
    \label{fig_solenoide}
\end{figure}

 To end this section, we define the set $\mu_n^{(a)}(R):=\{x\in R^\times : x^n=a\}$ for any $a\in R^\times$, $n \in \N^*$ and $R$ a commutative associative unitary $K$-algebra. Notice that $\mu_n(R):=\mu_n^{(1)}(R)$ is the group of $n$-th roots of the unit. In particular, we have that $\mu_2(K)=\{\pm 1\}\cong C_2$ if  $\car(K)\neq 2$ and $\mu_2(K)=\{ 1\}$ if $\car(K)=2$. There is an action (which is transitive and free ) $\mu_{2^n}(R)\times\mu_{2^n}^{(a)}(R)\to\mu_{2^n}^{(a)}(R)$, given by multiplication. We also define the sets
$$\mu_{2^\infty}^{(a)}(R):=\{(x_i)_{i\ge 1}\colon x_i\in\mu_{2^i}^{(a)}(R), x_{i+1}^2=x_i\}$$
for any $a \in R^\times$. In particular, if $a=1$, we write $\mu_{2^\infty}(R):=\mu_{2^\infty}^{(1)}(R)$ which is a group and there is an action $\mu_{2^\infty}(R)\times \mu_{2^\infty}^{(a)}(R)\to \mu_{2^\infty}^{(a)}(R)$ given by componentwise multiplication. Notice that since each $\mu_{2^\infty}^{(a)}(R)$ is contained in the group $(R^\times)^{\N^*}$ we can consider the union
$$\bigcup_{a\in K^\times} \mu_{2^\infty}^{(a)}(R)$$
And this is a group.

\section{Diagonalizable automorphisms of evolution algebras}\label{sec_diagonaliable_automorphism}

\begin{definition}\label{def_diag}
    Assume that $A$ is an evolution algebra and $B=\{u_i\}_{i\in\Lambda}$ a natural basis of $A$.
An automorphism $f\in\aut(A)$ is said to be {\it diagonalizable} (relative to $B$) if $f(u_i)\in K^\times u_i$ for any $i\in\Lambda$. We define {\it the group of diagonalizable automorphisms relative to $B$}, denoted by $\diag(A;B)$, as 
\begin{equation*}
    \diag(A;B):=\{f\in\aut(A)\colon f(u_i)\in K^\times u_i, i\in\Lambda\}.
\end{equation*}
\end{definition}

\begin{remark}
Notice that, as the notation suggests, the group of diagonalizable automorphisms strongly depends on the chosen basis (unless the algebra satisfies additional properties). For example, consider the evolution algebra $A$ with natural basis $B=\{e_1,e_2\}$ and product defined by $e_1^2=e_1+e_2$ and $e_2^2=0$. In this case, $\diag(A;B)=\{1\}$. However, if we consider the algebra with the natural basis $B'=\{e_1+e_2,e_2\}$, then $\diag(A;B')=K^{\times}$. 
\end{remark}

\begin{example}
    Take the graph
$E$ such that $E^0:=\{w\}\sqcup\{v_i\}_{i\in\in\Z}$ and $E^1:=\{f_i\}_{i\in\Z}$ with $s(f_i)=v_i$ and $r(f_i)=w$ for any $i$.
This is the graph associated to the evolution algebra $A$ with natural basis $B=E^0$ and multiplication $w^2=0$, $v_i^2=w$ for any $i$. 
\begin{center}
    \begin{tikzcd}[column sep=small, row sep=large]
\arrow[r, dotted,no head]& v_{-i-1}\arrow[r, dotted, no head]\arrow[drrr] &v_{-i}\arrow[r, dotted, no head]\arrow[drr] & v_{-1} \arrow[dr] & 
v_0 \arrow[d] & v_1 \arrow[dl]\arrow[r,dotted,no head] & v_i \arrow[r, no head,dotted]\arrow[dll] & v_{i+1}\arrow[r, no head, dotted] \arrow[dlll]& \ \\
& & & & w
\end{tikzcd}
\end{center} 

If $f\in\diag(A;B)$ then there are nonzero scalars $x_i$ such
that $f(v_i)=x_i v_i$ ($i\ne 0$) and $f(w)=x_0 w$. Then 
$f(v_i^2)=f(w)=x_0 w$ while $f(v_i)^2=(x_i v_i)^2=x_i^2 w$, whence
$x_0=x_i^2$ for any $i\ne 0$. Assuming that the ground field $K$ is
quadratically closed, $\sqrt{x_0}$ exists and $x_i=\pm x_j$ if $i\ne j$. We analyse two cases:
\begin{enumerate}
    \item If $\car(K)=2$ then $\l:=\sqrt{x_0}$ exists and it is unique. So $f$ is of the form $f(w)=\l^2 w$, $f(v_i)=\l v_i$ ($i\ne 0$), where $\l\in K^\times$. In this case, $\diag(A; B)\cong K^\times$, a one-dimensional torus.
    \item If $\car(K)\ne 2$ we take $\l$ to be one of the square roots of $x_0$. Then $f(w)=\l^2 w$ as before, but $f(v_i)=\l\epsilon_i v_i$ where $\epsilon_i=\pm 1$ for $i\ne 0$.
    In this case $\diag(A;B)\cong K^\times\times (S^{0})^\Z$, where $(S^{0})^\Z$ is the multiplicative group of sequences $(\epsilon_i)_{i\in\Z}$ with $\epsilon_i\in S^0=\{\pm 1\}$. 
\end{enumerate}
Notice that in any case, other automorphisms of this algebra are 
possible: if we take a bijection $\s\colon\Z\to\Z$, then the map
$g\colon A\to A$ fixing $w$ and making $g(v_i)=v_{\s(i)}$ is an automorphism of $A$.
\end{example}

If there is essentially a unique natural basis (up to scaling and reordering), the notation $\diag(A; B)$ can be simplified to $\diag(A)$. For example, when $A$ is a finite-dimensional perfect evolution algebra (see \cite{Elduque}). In section \ref{sec_2LI} and subsection \ref{subsec_inversibility}, we will devote some attention to the problem of essential uniqueness of natural bases. 

\subsection{Interactions with evolution algebras}
In this subsection, we illustrate with some motivating examples how the solenoids presented above coincide
with the diagonalizable automorphisms of evolution algebras. 

\begin{example}  
Let $\I_E$ be the category associated to the following graph:

\begin{equation}\label{diaone}
E: \hspace{1cm}
\begin{tikzcd}[row sep=normal, column sep=normal]
5 \arrow[rd,controls={+(0,-0.5) and +(-0.3,0)},"a_{51}"']  &         &         \\
        & 1\arrow[r,"a_{12}"] \arrow[lu,controls={+(.5,.5) and +(0.5,0.1)},"a_{15}"']&  2\arrow[d,"a_{23}"]\\
        & 4  \arrow[u,"a_{41}"]&  3\arrow[l,"a_{34}"]
\end{tikzcd}
\end{equation}

Consider now the functor $\F\colon \I_E\to\C_0$,  where $\C_0$ is defined in  \ref{tonta} and $K$ is a field with cubic roots of the unit $\{1, \omega, \omega^2\}$, induced by the graph morphism $E \to U(\C_0)$ (see the adjunction defined in formula \eqref{noitcnujda}) such that $i \mapsto K^\times$ with $a_{ij} \mapsto s$ where $s \colon K^\times \to K^\times$ such that $s(x) = x^2$.
By Remark \ref{atnot} we may consider $\F$ as a functor $\F\colon\I_E\to\grp$. Then, $\L{\F}\cong\mu_3(K)$.

 To prove this, we apply the functor $\F$ to the category $\I_E$ and obtain a diagram so that each time an element advances through a (non-identity) arrow, we square it. There are group homomorphisms $t_i\colon\L{\F}\to K^\times=\F(i)$ such that 
$t_i(x)^2=t_j(x)$ for each $\a\colon\F(i)\to\F(j)$ such that $\a\in \{a_{12},a_{23},a_{34},a_{41},a_{15},a_{51}\}$. 
\begin{center}

\begin{tikzcd}[row sep=small, column sep=small]
                               &                                             &  & \lim\limits_\leftarrow \F \arrow[llddd, "t_1"] \arrow[dddd, "t_4"] \arrow[rddd, "t_2"'] \arrow[rrrdddd, "t_3"] \arrow[lllddd, "t_5"'] &                      &  &                       \\
                               &                                             &  &                                                                                                                                                &                      &  &                       \\
                               &                                             &  &                                                                                                                                                &                      &  &                       \\
K^\times \arrow[r, bend right, "s"'] & K^\times  \arrow[rrr,"s"] \arrow[l, "s"'] &  &                                                                                                                                                & K^\times \arrow[rrd,"s"'] &  &                       \\
                               &                                             &  & K^\times  \arrow[llu,"s"]                                                                                                                          &                      &  & K^\times  \arrow[lll,"s"]
\end{tikzcd}
\end{center}
Now, 
\begin{equation*}
    \begin{array}{R{1.3em}R{0.25em}l}
    \L{\F}  &= &\{(x_i)_1^5\colon x_j=\F(\a)(x_i),\forall \a\colon i\to j\text{ in } \I_E\}  \\
    &= &\{(x_i)_1^5\colon x_2=x_1^2, \ x_3=x_2^2, \ x_4=x_3^2, \ x_1=x_4^2, \ x_5=x_1^2, \ x_1=x_5^2\}.
    \end{array}
\end{equation*}
Eliminating parameters, we obtain that $x_1^3=1$ and consequently  
$$\L{\F}=\{(1,\dots,1),(\omega,\omega^2,\omega,\omega^2,\omega),(\omega^2,\omega,\omega^2,\omega,\omega^2)\}.$$
The isomorphism between $\L{\F}$ and $\mu_3(K)$ is clear.

Consider any evolution algebra $A$ which graph, relative to
a natural basis $\{u_i\}_{i=1}^4$, is \eqref{diaone}. So we have 
\begin{center}
\begin{tikzcd}[row sep=small, column sep=small]
u_5 \arrow[rd,controls={+(0,-0.5) and +(-0.9,0)} ]  &         &         \\
        & u_1\arrow[r] \arrow[lu,controls={+(.5,.5) and +(0.9,0.1)} ]&  u_2\arrow[d]\\
        & u_4  \arrow[u]&  u_3 \arrow[l],
\end{tikzcd}
\end{center}
and if we compute the diagonalizable automorphisms $f\colon A\to A$, that is, the automorphisms satisfying $f(u_i)=\l_i u_i$ ($i=1,\ldots, 5$, $\l_i\in K$), we find that
$f(u_1)=\omega u_1$, $f(u_2)=\omega^2 u_1$, $f(u_3)=\omega u_3$, 
$f(u_4)=\omega^2 u_4$, $f(u_5)=\omega^2 u_5$, where $\omega\in\mu_3(K)$. Thus,
the group of diagonalizable automorphisms of $A$ is (isomorphic to) $\mu_3(K)$ and therefore it is
$\lim\limits_\leftarrow\F$. If the evolution algebra with graph \eqref{diaone} is perfect, it is easy to check that any automorphism of $A$ is diagonalizable. So, in this case, we have 
$\aut(A)\cong\lim\limits_\leftarrow\F$.
\end{example} 

The idea issued by the previous paragraphs is the following: take an evolution algebra $A$ and let $\I_E$ be the small category associated to its graph relative to a natural basis $B$. 
Then the group of diagonalizable automorphisms of an evolution algebra $A$, relative to the basis $B$,
is isomorphic to the inverse limit of the functor $\lim\limits_\leftarrow\F$. Given that the functor $\F$ is ubiquitous, we give it a notorious
place in this work:

\begin{definition} \label{notorius}
     Taking into account
    the adjunction in formula \eqref{noitcnujda} and Remark~\ref{atnot},
     we define $\CC$ as the functor $\mathfrak{C}\colon \I_E\to \grp$ such that $\CC(i)=K^\times$ for any $i$ and $\CC(\a)\colon K^\times\to K^\times$ is the squaring map for any $\a \in E^1$.
\end{definition}

\begin{example} 
For a first example in the infinite-dimensional case, consider the category $\I_E$ induced by the graph in Figure \ref{fig_example_solenoide}, that is, the class of objects is $\N$ and 
$\hom(i+1,i)$ has cardinal one for any $i$. Let $\CC$ be the functor defined in Definition \ref{notorius}.  Then 
$\L{\CC}$  is the group of all sequences $(x_i)_{i\in\N^*}$ such that $x_{i+1}^2=x_i$ for any $i$. 

\[\begin{tikzcd}[row sep=small,column sep=small]
                      &  &                     &  & \lim\limits_\leftarrow\mathfrak{C} \arrow[rrrrdd, "t_1"] \arrow[rrdd, "t_2"] \arrow[dd, "t_i"] \arrow[lldd, "t_{i+1}"] &  &                     &  &          \\
                      &  &                     &  &                                                                                                                          &  &                     &  &          \\
{} \arrow[rr, dotted] &  & K^\times \arrow[rr,"s"'] &  & K^\times \arrow[rr,dotted]                                                                                                      &  & K^\times \arrow[rr,"s"'] &  & K^\times
\end{tikzcd}\]
Now, if $A$ is an evolution algebra whose graph relative to a basis $\{u_i\}_{i\in\N^*}$ is the one given in Figure $\ref{fig_example_solenoide}$, it is easy to see that the
diagonalizable automorphisms $f$ are of the form $f(u_i)=\l_i u_i$, where $\l_{i+1}^2=\l_i$.
So again the group of diagonalizable automorphisms of $A$ is isomorphic to $\L{\CC}=\{(x_n)_{n\ge 1}\colon x_{n+1}^2=x_n, n\ge 1\}$. 
If we take an element $(x_n)_{n\geq 1}$ in this group and define $a:=x_1$, then 
$x_2^2=x_1= a$ so $x_2\in\mu_{2}^{(a)}(K)$. In addition, $x_3^4=x_2^2=a$ and
$x_3\in\mu_{2^2}^{(a)}(K)$. In general
each $x_n$ is in $\mu_{2^{n-1}}^{(a)}(K)$. Consequently, $(x_n)_{n\geq 1}\in\mu_{2^\infty}^{(a)}(K)$. A moment's reflection reveals that $$\L{\CC}=\bigcup_{a\in K^\times}\mu_{2^\infty}^{(a)}(K).$$
\end{example}

\begin{example}\label{diomucho}   Let $A$ be a perfect evolution algebra with a natural basis
$B=\{u_i\}_{i \in \N^*}$ and multiplication $u_1^2=u_1$, $u_{i+1}^2=u_i$ for $i\ge 1$. This algebra is perfect and, as we will see, $\aut(A)$ is infinite (in contrast to the fact 
that perfect finite-dimensional evolution algebras have finite groups of automorphisms). Moreover, the structure matrix is neither invertible nor 2LI (see Section ~\ref{sec_2LI} for definitions). Even taking into account the results of Section~\ref{sec_non_diagonalizable}, we can not conclude that the group of automorphisms (relative to $B$) has the special form described in Corollary \ref{tesis}, nor can we say that it is diagonalizable. However, this is the case as the following proposition shows.

\begin{proposition}
    Any automorphism of $A$ is diagonalizable.
\end{proposition}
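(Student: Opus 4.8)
The plan is to fix an arbitrary $f\in\aut(A)$ and, writing $B=\{u_i\}_{i\in\N^*}$, prove by induction on $n$ that $f(u_n)\in K^\times u_n$. Only two features of $f$ will be used: it respects the orthogonality relations, $f(u_i)f(u_j)=f(u_iu_j)=0$ whenever $i\neq j$, and it respects the squaring relations $f(u_1)^2=f(u_1)$ and $f(u_{n+1})^2=f(u_{n+1}^2)=f(u_n)$ for $n\geq 1$. The one computational fact I would record at the outset is that, since every element of $A$ is a \emph{finite} $K$-linear combination of the $u_i$, for $x=\sum_i a_iu_i$ one has $x^2=\sum_i a_i^2u_i^2=(a_1^2+a_2^2)u_1+\sum_{k\geq 2}a_{k+1}^2u_k$ (using $u_1^2=u_1$ and $u_{i+1}^2=u_i$); this formula drives everything.

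\textbf{Base case.} I would first show that $u_1$ is the unique nonzero idempotent of $A$. If $e=\sum_{i=1}^{N}a_iu_i$ with $a_N\neq 0$ and $e^2=e$, the formula above shows that $e^2$ has zero $u_N$-component once $N\geq 2$, forcing $N\leq 1$; then $e=a_1u_1$ with $a_1^2=a_1$, so $e=u_1$. Since $f$ is injective, $f(u_1)$ is a nonzero idempotent, hence $f(u_1)=u_1$.

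\textbf{Inductive step.} Assume $f(u_i)=\l_iu_i$ with $\l_i\in K^\times$ for $1\leq i\leq n$ (so $\l_1=1$), and write $f(u_{n+1})=\sum_i a_iu_i$. Evaluating the orthogonality relation $f(u_1)f(u_{n+1})=0$ and using $f(u_1)=u_1$ gives $u_1\cdot\sum_i a_iu_i=a_1u_1=0$, hence $a_1=0$. Then the squaring relation yields $f(u_{n+1})^2=a_2^2u_1+\sum_{k\geq 2}a_{k+1}^2u_k=\l_nu_n$. For $n\geq 2$: the $u_1$-coefficient gives $a_2^2=0$, so $a_2=0$ (as $K$ is a field); the $u_k$-coefficient for $k\geq 2$, $k\neq n$, gives $a_{k+1}=0$; the $u_n$-coefficient gives $a_{n+1}^2=\l_n\neq 0$. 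Hence $f(u_{n+1})=a_{n+1}u_{n+1}$ with $a_{n+1}\in K^\times$. For $n=1$ one reads off $a_{k+1}=0$ for all $k\geq 2$ and $a_2^2=\l_1=1$, so again $f(u_2)\in K^\times u_2$. This closes the induction and proves that $f\in\diag(A;B)$.

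\textbf{Where the obstacle sits.} The only subtle point is the loop at $u_1$: because both $u_1^2=u_1$ and $u_2^2=u_1$ contribute to the $u_1$-component, the squaring relations by themselves never control the $u_1$-coefficient $a_1$ of $f(u_{n+1})$ — they only produce a relation of the shape $a_1^2+a_2^2=\ast$, which (for instance when $-1$ is a square in $K$) does not force $a_1=0$. It is precisely here that the orthogonality relation $f(u_1)f(u_{n+1})=0$ must be invoked; once $a_1$ is eliminated, all other coefficients are pinned down purely formally, and no hypothesis on $\car(K)$ or on $K$ being algebraically closed is needed.
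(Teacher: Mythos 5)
Your proof is correct, and it follows essentially the same strategy as the paper's: write $f(u_{n+1})$ in coordinates, use the orthogonality relation with $u_1$ to kill the $u_1$-coefficient, and let the squaring relation $f(u_{n+1})^2=f(u_n)$ pin down the remaining coefficients by induction. The one genuine difference is your base case: you observe that $u_1$ is the unique nonzero idempotent of $A$ (by comparing top-degree coefficients in $e^2=e$), which immediately forces $f(u_1)=u_1$ and lets $u_2$ be absorbed into the uniform inductive step. The paper instead establishes $\theta(u_1)\in Ku_1$ and $\theta(u_2)\in Ku_2$ by a more involved simultaneous analysis of the coefficient equations for $\theta(u_1)$ and $\theta(u_2)$ (using $\theta_{1(k+1)}\theta_{2(k+1)}=0$ to propagate vanishing). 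Your idempotent observation is a modest but real simplification of that first half; both arguments are field-agnostic, as you note.
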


\begin{proof}
Let $\theta\in\aut(A)$. So, $\theta(u_i)=\sum_k\theta_{ik} u_k$ and the conditions for being a homomorphism imply that
$$\begin{cases}\theta_{i\a}\theta_{j\a}=0,\ \a\ge 3,\  i\ne j\\
\theta_{i1}\theta_{j1}+\theta_{i2}\theta_{j2}=0,\ i\ne j\\
\theta_{11}=(\theta_{11})^2+(\theta_{12})^2\\
\theta_{1k}=(\theta_{1(k+1)})^2,\ k\ge 2\\
\theta_{11}=(\theta_{21})^2+(\theta_{22})^2\\
\theta_{1k}=(\theta_{2(k+1)})^2,\ k\ge 2.
\end{cases}$$
Take $k\ge 2$ and think of $\theta_{1k}$. If 
$\theta_{1k}\ne 0$, the equations above imply that 
$\theta_{1(k+1)},\theta_{2(k+1)}\ne 0$. But $\theta_{1(k+1)}\theta_{2(k+1)}=0$, a contradiction.
Thus $\theta_{1k}=0$ if $k\ge 2$. The last equation
gives $\theta_{2k}=0$ for $k\ge 3$. Then the third equation gives $\theta_{11}=1$. The second gives $\theta_{j1}=0$ for $j\ne 1$. Now the second equation
gives $\theta_{21}=0$. So far $\theta(u_i)\in Ku_i$ for $i=1,2$. Next assume that $\theta(u_i)\in K u_i$ for 
$i=1,\ldots,q-1$. Then, for $q>2$ 
$$\theta(u_q^2)=\theta(u_{q-1})=\theta_{(q-1)(q-1)} u_{q-1}.$$
But $\theta(u_q)^2=\left (\sum_{k\geq 1}\theta_{qk} u_k\right )^2=\sum_{k\geq 1}(\theta_{qk})^2 u_k^2=
((\theta_{q1})^2 +(\theta_{q2})^2)u_1+\sum_{k>2}(\theta_{qk})^2 u_{k-1}$
which gives 
$$\begin{cases}\theta_{(q-1)(q-1)}=(\theta_{qq})^2,\\ 
(\theta_{q1})^2+(\theta_{q2})^2=0,\\
\theta_{qk}=0, (k\ne 1,2,q).
\end{cases}$$
Moreover, $0=\theta(u_1)\theta(u_q)=u_1\sum_{k\geq 1}\theta_{qk} u_k=\theta_{q1} u_1$ implying $\theta_{q1}=0$ (and similarly $\theta_{q2}=0$). In conclusion, $\theta(u_q)\in K u_q$.\end{proof}
\color{black}
Next, we investigate the group $\aut(A)$, which we know coincides with $\diag(A;B)$. The graph of the algebra is given by Figure \ref{fig_probando}.
\begin{figure}[h]
    \centering
    \begin{tikzcd}
	{} & \overset{u_3}{\bullet} & \overset{u_2}{\bullet} & \overset{u_1}{\bullet}
	\arrow[dashed, from=1-1, to=1-2]
	\arrow[from=1-2, to=1-3]
 \arrow[from= 1-3, to=1-4]
	\arrow[from=1-4, to=1-4, loop, in=55, out=125, distance=10mm]
\end{tikzcd}
\caption{}
    \label{fig_probando}
\end{figure}

Applying the functor $\CC$ we get the diagram given in Figure \ref{fig_probandoprobando}

\begin{figure}[h]
    \centering
    \begin{tikzcd}
	{} & K^\times & K^\times & K^\times
	\arrow[dashed, from=1-1, to=1-2]
	\arrow[from=1-2, to=1-3,"s"]
 \arrow[from= 1-3, to=1-4,"s"]
	\arrow[from=1-4, to=1-4, loop, in=55, out=125, distance=10mm,"s"]
\end{tikzcd}
\caption{}
    \label{fig_probandoprobando}
\end{figure}
\[\]
where again $s(x)=x^2$ for any $x\in K^\times$. Then, $\L{\CC}$ is the group of all
sequences $(x_i)_{i\in\N^*}$ such that $x_1=1$ and $x_{i+1}^2=x_i$ for $i\ge 1$.
{ Further, since $\aut(A)=\diag(A;B)\cong\L{\CC}\cong\L{\m_{2^n}(K)}$, we find that for this algebra, the automorphism group agrees with the so called Tate module of the group $K^\times$. The Tate module was introduced in \cite{Tate} in the context of abelian varieties. However, we will use the variant related to abelian groups. Following \cite{serre},
 the Tate module is associated with the $\ell$-adic completion of the torsion subgroup of an abelian group (for a prime $\ell$). The definition given in
 \cite{serre} is:
}
%

%

 %
 {
\begin{definition}
    Let $\A$ be an abelian group and $\ell$ a prime. The {\em Tate module} $T_\ell(\A)$ is the inverse limit:  
  \[
  T_\ell(\A) = \varprojlim_n \A[\ell^n],
  \]  
   where $\A[\ell^n] = \{ a \in \A \mid \ell^n a = 0 \}$ is the $\ell^n$-torsion subgroup, and the transition maps are given by multiplication by $\ell$.  
\end{definition}
In our case we can take $\A=K^\times$ so that the
$2^n$ torsion is $\m_{2^n}(K)$, that is, $\A[2^n]=\m_{2^n}(K)$ and consequently
$T_2(K^\times)=\L{\m_{2^n}(K)}=Aut(A)$. The group of
automorphisms is the Tate module $T_2(K^\times)$,
so we can take advantage of the known values of this. Below we describe $T_2(K^\times)$ for various fields $K$.
 
\medskip

\begin{enumerate}
    \item Algebraically closed fields of $\car(K)\ne 2$:
    \begin{equation*}
        T_2(K^\times) \cong \mathbb{Z}_2,
    \end{equation*}
   the free rank-1 $\mathbb{Z}_2$-module. See \cite[Section 5.3]{dembele2023honda}.
   
    \item Finite fields ($ K = \mathbb{F}_{p^n}$ with $p$ odd and $n\ge 1$):\\
   \begin{equation*}
       T_2(\mathbb{F}_{p^n}^\times)= 1.
   \end{equation*} 
   
    \item Fields with/without $2^n$-roots of unity:
\begin{itemize}
    \item If $K$ contains no nontrivial $2^n$-th roots of unity for any $ n \geq 1 $ (e.g., $ \mathbb{Q} $ or $ \mathbb{Q}_p $ with $ p \neq 2 $): 
    \begin{equation*}
        T_2(K^\times) = 1.
    \end{equation*}
    
    \item If $ K $ contains $ \sqrt{-1} $ but no higher $ 2^n$-roots (e.g. $ K = \mathbb{Q}(i) $):
    \begin{equation*}
        T_2(K^\times) =1.
    \end{equation*}
   \item If $K$ contains all $2^n$-roots of unity (e.g. $K=\mathbb{Q}(\zeta_{2^\infty}):=\bigcup_{n\ge 1}\mathbb{Q}(\zeta_{2^n})$ with $\zeta_n$ primitive $n$-th root of $1$): 
   \begin{equation*}
       T_2(K^\times)\cong\Z_2.
   \end{equation*}
\end{itemize}
    \item $\car(K) = 2$:
    \begin{equation*}
        T_2(K^\times) = 1
    \end{equation*}
    since $x^{2^n} = 1$ implies
   $x = 1$.
\end{enumerate}

Not every value of $T_2(K^\times)$ above is straightforward to justify. Therefore, we now provide some hints to support and verify these.

The case of characteristic $2$ is obvious. More generally if $K$ has characteristic other than $2$ and $\cup_{n\ge 1}\m_{2^n}(K^\times)$ is a finite set, we have $T_2(K^\times)=1$. Indeed, The chain
$$\m_2(K)\subseteq\m_{2^2}(K)\subseteq\cdots\subseteq\m_{2^i}(K)\subset\m_{2^{i+1}}(K)\subseteq\cdots$$
is stationary hence we may assume that $\m_{2^i}(K)=\m_{2^{j}}(K)$ for $j\ge i$, where $i$ is the
stationary index of the chain. Now, take any element $(x_i)\in\L{\m_{2^n}(K^\times)}$. We know
that $x_{j+1}^2=x_j$ for any $j$. Thus, $x_{j+k}^{2^k}=x_j$ for any $j$ and $k$. Take $k>i$ (the stationary index of the above chain). Then $x_{j+k}\in\m_{2^{j+k}}(K)=\m_{2^i}(K)$, because $j+k>i$. Thus $x_{j+k}^{2^i}=1$ and
$x_j=x_{j+k}^{2^k}=(x_{j+k}^{2^i})^{2^{k-i}}=1$. Consequently, $T_2(K^\times)=1$.

If $K/\Q$ is an extension containing all the roots
$\zeta_{2^n}=\exp(2\pi i/2n)$ for $n\ge 1$, then 
$\m_{2^n}(K)=\{1,\zeta_{2^n},\zeta_{2^n}^2,\ldots,\zeta_{2^n}^{2^n-1}\}\cong C_{2^n}$ and the inverse limit is the group of $2$-adic integers.
}
\end{example}

 We conclude this section by proving that the group of diagonalizable automorphisms of an evolution algebra is the inverse limit of the functor $\CC\colon \I_E\to\grp$ (recall Definition \ref{notorius}).

\begin{theorem}\label{arop}
    Let $A$ be any evolution $K$-algebra, fix a natural basis $B$, and consider the graph $E$ associated with $B$. Then, there is a group isomorphism $\displaystyle\diag(A;B)\cong\lim\limits_{\leftarrow}\CC$.
\end{theorem}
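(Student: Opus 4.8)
The plan is to construct explicit mutually inverse maps between $\diag(A;B)$ and $\lim_{\leftarrow}\CC$ and check they are group homomorphisms. Write $B=\{u_i\}_{i\in\Lambda}$, so $E^0=\Lambda$ (identifying $u_i$ with $i$), and recall that $\CC(i)=K^\times$ for every $i$ while $\CC(\alpha)$ is the squaring map for every edge $\alpha$. By Remark~\ref{remark_lolitienesueño}, $\lim_{\leftarrow}\CC$ is the subgroup of $\prod_{i\in\Lambda}K^\times$ consisting of families $(x_i)_{i\in\Lambda}$ with $x_j=x_i^2$ whenever there is an edge $i\to j$ in $E$; note that because of the adjunction \eqref{noitcnujda}, it suffices to impose this condition on edges rather than on all paths, since $\CC$ is determined by its values on $E^1$.

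First I would define $\Phi\colon\diag(A;B)\to\prod_{i\in\Lambda}K^\times$ by $\Phi(f)=(x_i)_{i\in\Lambda}$ where $f(u_i)=x_i u_i$ (the $x_i$ are nonzero since $f$ is an automorphism). I must check that $\Phi(f)$ lands in $\lim_{\leftarrow}\CC$. Suppose there is an edge $i\to j$ in $E$; by definition of the associated graph this means $\omega_{ji}\ne 0$, i.e. $u_j$ appears with nonzero coefficient in $u_i^2=\sum_{k}\omega_{ki}u_k$. Applying $f$ and using that $f$ is an algebra homomorphism, $f(u_i)^2=x_i^2 u_i^2=\sum_k x_i^2\omega_{ki}u_k$, while $f(u_i^2)=\sum_k \omega_{ki}x_k u_k$. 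Comparing the coefficient of $u_k$ for each $k$ with $\omega_{ki}\ne 0$ gives $x_i^2\omega_{ki}=x_k\omega_{ki}$, hence $x_k=x_i^2$; in particular $x_j=x_i^2$. Thus $\Phi(f)\in\lim_{\leftarrow}\CC$. That $\Phi$ is a group homomorphism is immediate: $(f\circ g)(u_i)=f(x_i^g u_i)=x_i^f x_i^g u_i$, so $\Phi(fg)=\Phi(f)\Phi(g)$ componentwise.

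Conversely, given $(x_i)_{i\in\Lambda}\in\lim_{\leftarrow}\CC$, define $\Psi((x_i))\colon A\to A$ as the linear map with $u_i\mapsto x_i u_i$. This is clearly bijective (inverse $u_i\mapsto x_i^{-1}u_i$). I would verify it is an algebra homomorphism: since $B$ is a natural basis, the product is determined by the $u_i^2$, and $\Psi(u_i)\Psi(u_j)=x_ix_j u_iu_j=0=\Psi(u_iu_j)$ for $i\ne j$, while $\Psi(u_i)^2=x_i^2 u_i^2=x_i^2\sum_k\omega_{ki}u_k$ and $\Psi(u_i^2)=\sum_k\omega_{ki}x_k u_k$; these agree precisely because $\omega_{ki}\ne 0$ forces an edge $i\to k$, hence $x_k=x_i^2$, so the nonzero terms match (and the terms with $\omega_{ki}=0$ vanish on both sides). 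Thus $\Psi((x_i))\in\diag(A;B)$, and $\Psi$ is evidently a homomorphism inverse to $\Phi$, completing the proof.

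**Main obstacle.** The only subtle point is the bookkeeping in the equivalence "edge $i\to j$ in $E$" $\iff$ "$\omega_{ji}\ne0$" and making sure the homomorphism condition is checked on $u_i^2$ only (legitimate because $B$ is natural) and that the squaring constraint need only be imposed on edges, not arbitrary paths — both handled by unwinding the definitions of the associated graph and of $\CC$ via the adjunction \eqref{noitcnujda}. Everything else is routine linear algebra; there is no genuine analytic or combinatorial difficulty once the dictionary is set up correctly.
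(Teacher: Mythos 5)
Your proposal is correct and is essentially the paper's own argument: both hinge on the single computation that applying a diagonal map to $u_i^2$ and comparing coefficients forces $x_j=x_i^2$ exactly along the edges of $E$, in one direction to show the scalars of a diagonalizable automorphism form a coherent family, and in the other to show a coherent family defines an automorphism. The only cosmetic difference is that the paper packages this as verifying the cone condition and terminality for the functor $\CC$, whereas you work directly with the concrete realization of the inverse limit as coherent tuples and exhibit explicit mutually inverse group homomorphisms; your write-up is in fact slightly more careful on the point that the candidate inverse map actually yields an algebra homomorphism.
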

\begin{proof}
For any $ u \in E^0 $, define $\pi_u \colon \diag(A; B) \to \CC(u) = K^\times$ by assigning to each $\theta \in \diag(A;B)$ the scalar such that $\theta(u) = \pi_u(\theta)u$. It is easy to check that $\pi_u$ is a group morphism. Consider an edge $f$ with source $u$ and range $v$. We check the commutativity of the triangles
\begin{center}
\begin{tikzcd}[cramped, column sep=small]
\diag(A;B) \arrow{r}{\pi_u}  \arrow[rd,"\pi_v"'] 
  & \CC(u) \arrow{d}{\CC(f)} & x\arrow[d]\\
    & \CC(v) & x^2.
\end{tikzcd}
\end{center}
Take $\theta\in\diag(A;B)$ with $\theta(u)=\pi_u(\theta)u$, while $\theta(v)=\pi_v(\theta)v$. Since the edge $f$ connects $u$ to $v$ we know that $u^2= m v+\xi$, where $m\in K^\times$ and $\xi v=0$. But, 
$$\pi_u(\theta)^2(mv+\xi)=\theta(u)^2=\theta(u^2)=\theta(mv+\xi)=m\theta(v)+\theta(\xi)=m\pi_v(\theta)v+\theta(\xi),$$ so $\pi_u(\theta)^2 m v=m \pi_v(\theta) v$ because $\theta(\xi)v=\pi_v(\theta)^{-1}\theta(\xi v)=0$. Consequently, $\pi_u(\theta)^2=\pi_v(\theta)$, that is, $\CC(f)(\pi_u)(\theta) = \pi_v(\theta)$ and the commutativity of the triangles above is proved. So, $\diag(A;B),\{\pi_u\}_{u\in E^0}$ is a cone for the functor $\CC$. 

To end the proof, we must check that the cone is terminal. For this, assume that $G$ is another group endowed with homomorphisms $t_u\colon G\to\CC(u)$ (for any $u\in E^0$) such that $\CC(f)t_u=t_v$ whenever $f$ is
an edge with $s(f) = u$ and $r(f) = v$. We have to prove that there is a unique group homomorphism $t\colon G\to\diag(A; B)$ such that $\pi_u t=t_u$ for any vertex $u$. 
Notice that for any $ g \in G $, we have a nonzero scalar $ t_u(g) \in K^\times $, and we define an automorphism $ \theta \colon A \to A $ by setting $ \theta(u) = t_u(g)u $ for all $ u \in E^0 $. Then, we define $ t(g) = \theta $ and, since $ \pi_u(t(g)) = \pi_u(\theta) = t_u(g) $, it follows that $ \pi_u \circ t = t_u $.
The fact that $t$ is a group homomorphism, as well as its uniqueness property, is easy to check. 
\end{proof}

\section{Deviations from the finite-dimensional case}\label{sec_deviation_finitedimensional}

Motivated by the distinct behaviours of the automorphism groups in finite-dimensional and infinite-dimensional evolution algebras, in this section, we highlight some differences in the behaviour of infinite-dimensional evolution $ K $-algebras compared to the finite-dimensional case.

\subsection{Perfection {\rm x} Nondegeneracy} While in the finite-dimensional case, the perfection of evolution algebras implies nondegeneracy, in the infinite-dimensional case, there are perfect evolution algebras that are degenerate. For instance,
the $K$-algebra $A=\Span{(\{e_i\}_{i\geq 0})}$ with a countable basis whose multiplication is $e_0^2=0$, $e_{i+1}^2=e_i$ and $e_ie_j=0$ when $i\ne j$. This algebra is degenerate, but $A^2=A$. Thus, the only hope to extend some finite-dimensional results regarding perfect algebras to arbitrary dimensions will likely require replacing the perfection condition with a stronger assumption.

\subsection{Perfection {\rm x} Finiteness of the automorphism group.}\label{soperfect} 
As it is well known, the automorphism group of finite-dimensional perfect
evolution algebras is finite (see \cite{Elduque}). In the infinite-dimensional case, this is no longer true. For instance, in the algebra $A$ of the previous item (which is perfect), for any diagonalizable automorphism $f\colon A\to A$ determined by a sequence $(x_i)_{i\geq 0}$ in $K$, that is, $f$ given by $f(e_i)=x_i e_i$ for every $i\geq 0$, 
we have  $x_ie_i=f(e_i)=f(e_{i+1}^2)=f(e_{i+1})^2=x_{i+1}^2 e_i$, so that $x_i=x_{i+1}^2$ for any $i\geq 0$. Thus,
the group $\aut_K(A)$ contains the subgroup of all the sequences $(x_i)_{i\ge 0}$ satisfying  $x_i=x_{i+1}^2$ for any $i \geq 0$. This subgroup is nothing but the inverse limit of the system

$$\cdots\to K^\times\to K^\times \to K^\times\to K^\times,$$
where all arrows are the squaring map $x\mapsto x^2$. So, $\displaystyle\lim_{\leftarrow}K^\times\subset \aut\nolimits_K(A)$ as a subgroup. If we take the complex numbers as the ground field, say $K=\complex$, then
$\displaystyle\lim_{\leftarrow}\complex^\times$ contains the circle $S^1$, since we can consider the group monomorphism $S^1\to \displaystyle\lim_{\leftarrow}\mathbb{C}^\times$ given by $\exp(\iu t)\mapsto (\exp(\iu t/2^j))_{j\ge 0}$, for any $j \geq 0 $.

Furthermore, we have the following commutative diagram.

\[\begin{tikzcd}
	{{\displaystyle \lim_{\leftarrow} \mathbb{C}^\times}} & {{\mathbb{C}^\times}} & {\mathbb{C}^\times} & {{\mathbb{C}^\times}} & {{\mathbb{C}^\times}} \\
	{\displaystyle\lim_{\leftarrow} S^1} & {S^1} & {S^1} & {{S^1}} & {{S^1}}
	\arrow[from=1-1, to=1-2]
	\arrow[dotted, from=1-2, to=1-3]
\arrow[hook, from=2-2, to=1-2]
\arrow[hook, from=2-3, to=1-3]
	\arrow["s", from=1-3, to=1-4]
	\arrow["s", from=1-4, to=1-5]
	\arrow[hook, from=2-1, to=1-1]
	\arrow[from=2-1, to=2-2]
	\arrow[dotted, from=2-2, to=2-3]
	\arrow["s"', from=2-3, to=2-4]
	\arrow[hook, from=2-4, to=1-4]
	\arrow["s"', from=2-4, to=2-5]
	\arrow[hook, from=2-5, to=1-5]
\end{tikzcd}\]
As it is well known, the inverse limit
$\displaystyle\lim_{\leftarrow} S^1$ is the dyadic solenoid, see Figure~\ref{fig_solenoide}. Since $\displaystyle \lim_{\leftarrow}S^1\subset\lim_{\leftarrow}\mathbb{C}^\times\subset \aut(A)$, we conclude that $\aut(A)$ contains the dyadic solenoid $\displaystyle\lim_{\leftarrow}S^1$.

We have described how the diagonalizable automorphisms of an evolution algebra $A$ can contribute to the infiniteness of the group $\aut(A)$. But non-diagonalizable automorphisms can also do it.
For instance, consider the evolution $K$-algebra $K^{(\N)}$ of
sequences $(x_n)_{n\ge 0}$ in $K$ with finitely-many nonzero entries. We consider here the component-wise product. Denote the canonical basis by $\{e_i\}_{i\ge 0}$. Then, for any bijection
$\s\colon\N\to\N$, we have an automorphism of $K^{(\N)}$ given by
mapping $e_j$ to $e_{\s(j)}$ for any $j$. In this case, $\aut(K^{(\N)})$ contains a copy of the group of bijections $\N\to\N$ (an infinite "permutation group").
\subsection{Perfection {\rm x} Inversibility of the structure matrix}\label{subsec_inversibility} If $B=\{e_i\}_{i\in\Lambda}$ is a natural basis of an evolution $K$-algebra and $M_B$ is the associated structure matrix, then $M_B$ is column finite (a matrix whose columns have finitely many nonzero entries).
Recall that column finite matrices can be multiplied and inversibility makes sense. 
When $M_B$ is invertible, we have $A=A^2$. However, the same example above shows that the perfection of evolution algebras does not imply the inversibility of the structure matrix of the algebra. In any case, we have:

\begin{proposition}\label{qnu}{\rm (c.f. \cite[Theorem 4.4]{Elduque2})} Let $A$ be any evolution algebra (possibly infinite-dimensional) and
assume that for a natural basis $B=\{e_i\}_{i\in\Lambda}$ of $A$, the
structure constants matrix $M_B$  is invertible. Then, for any natural basis
$\{u_i\}_{i\in \Lambda}$  there is a bijection $\sigma\colon \Lambda\to \Lambda$ and scalars $\{k_i\colon i\in \Lambda\}\subset K^\times$ such that $u_i=k_i e_{\sigma(i)}$ for any $i\in \Lambda$. 
\end{proposition}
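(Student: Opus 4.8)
The plan is to study the change-of-basis matrix relating $B$ to a second natural basis $\{u_i\}_{i\in\Lambda}$ and to show that the ``orthogonality'' relations forced by naturality, together with the invertibility of $M_B$, make this matrix a generalized permutation matrix. Write $u_i=\sum_{j\in\Lambda}p_{ji}e_j$, so that $P=(p_{ji})$ is column finite; since $\{u_i\}$ and $\{e_j\}$ are both bases of $A$ indexed by the same set $\Lambda$, the matrix $P$ is invertible with column-finite inverse $Q=(q_{ij})$, where $e_j=\sum_i q_{ij}u_i$. The goal will be to prove that every row of $P$ has exactly one nonzero entry and that the positions of these entries define a bijection $\tau$ of $\Lambda$; setting $\sigma:=\tau^{-1}$ and reading off the corresponding scalars then gives $u_i=k_ie_{\sigma(i)}$ with $k_i\in K^\times$.

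First I would use the product. For $i\neq j$ we have $u_iu_j=0$, and since $e_ke_l=0$ for $k\neq l$ and $e_k^2=\sum_{m}\omega_{mk}e_m$,
\[
u_iu_j=\sum_{k\in\Lambda}p_{ki}p_{kj}\,e_k^2=\sum_{m\in\Lambda}\Big(\sum_{k\in\Lambda}\omega_{mk}\,p_{ki}p_{kj}\Big)e_m .
\]
The vector $v^{(ij)}:=(p_{ki}p_{kj})_{k\in\Lambda}$ is finitely supported, being supported on the (finite) support of the $i$-th column of $P$, so $M_B v^{(ij)}$ is meaningful and the identity above says $u_iu_j=\sum_m (M_B v^{(ij)})_m e_m$. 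Hence $M_B v^{(ij)}=0$ for all $i\neq j$, and invertibility of $M_B$ forces $v^{(ij)}=0$, i.e. $p_{ki}p_{kj}=0$ for all $k$ whenever $i\neq j$. Equivalently, for each $k$ lying in the set $S$ of indices of nonzero rows of $P$, there is a unique index $\tau(k)$ with $p_{k,\tau(k)}\neq 0$; this defines a map $\tau\colon S\to\Lambda$.

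The second step is to upgrade $\tau$ to a bijection of $\Lambda$ using invertibility of $P$. Each column of $P$ is nonzero (as $u_i\neq 0$), so $\tau$ is surjective onto $\Lambda$. For $k\in S$, the relation $PQ=I$ gives $\delta_{kl}=(PQ)_{kl}=p_{k,\tau(k)}\,q_{\tau(k),l}$; taking $l=k$ yields $q_{\tau(k),k}\neq 0$, and taking $l\neq k$ yields $q_{\tau(k),l}=0$, so the $\tau(k)$-th row of $Q$ has its unique nonzero entry in column $k$. If $\tau(k_1)=\tau(k_2)$ this forces $k_1=k_2$, so $\tau$ is injective. Finally $S=\Lambda$: if some row $k_0$ of $P$ were zero, then $(PQ)_{k_0k_0}=\sum_i p_{k_0i}q_{ik_0}=0\neq 1$, contradicting $PQ=I$. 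Thus $\tau\colon\Lambda\to\Lambda$ is a bijection; with $\sigma:=\tau^{-1}$ we get $u_i=\sum_k p_{ki}e_k=p_{\sigma(i),i}e_{\sigma(i)}$, and $k_i:=p_{\sigma(i),i}\in K^\times$ since column $i$ of $P$ is nonzero.

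I expect the main difficulty to be bookkeeping rather than anything conceptual: the computation yielding $M_B v^{(ij)}=0$ is immediate, but in the infinite-dimensional, column-finite setting one must take care that $v^{(ij)}$ is finitely supported (so $M_B v^{(ij)}$ makes sense), that $\tau$ is defined and bijective on the whole of $\Lambda$ rather than on a proper subset, and that $Q=P^{-1}$ is itself column finite so that the products $(PQ)_{kl}$ have only finitely many nonzero terms. A possible shortcut for the second step is to observe that the whole argument is symmetric in $B$ and $\{u_i\}$ once $M_{\{u_i\}}$ is also known to be invertible, but the route above avoids needing that.
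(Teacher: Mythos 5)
Your proposal is correct and follows essentially the same route as the paper's proof: the relation $0=u_iu_j=\sum_k p_{ki}p_{kj}e_k^2$ combined with the injectivity of $M_B$ on finitely supported vectors (equivalently, linear independence of $\{e_k^2\}$) yields $p_{ki}p_{kj}=0$ for $i\neq j$, and invertibility of the change-of-basis matrix then forces it to be a generalized permutation matrix. The only difference is that you carefully justify, via $PQ=I$, the step the paper states without detail — that each row and column of $P$ has exactly one nonzero entry — which is a welcome elaboration rather than a new idea.
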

\begin{proof}
The proof is essentially an adaptation of \cite[Lemma 4.3 and Theorem 4.4]{Elduque2}. Write $u_i=\sum_k a_{ki} e_k$ so that for $i\ne j$ we have
$0=u_iu_j=\sum_ka_{ki} e_k\sum_h a_{hj} e_h=\sum_{k}a_{ki} a_{kj} e_k^2$. Observe that $\{e_k^2\}_{k\in \Lambda}$ is a linearly independent set. Indeed, if there is a linear combination $\sum_{i \in \Lambda_0} \lambda_i e_i^2 = 0$ with $\lambda_i \in K$, then there is a column vector $\lambda = (l_i)^T_{i\in \Lambda}$ ($T$ denote the transpose of the matrix) with $l_i=\lambda_i $ if $i \in \Lambda_0$ and $l_i=0$ in other case. In this way,  $M_B\lambda = 0$ which is only possible if $\lambda_i = 0$ for all $i \in \Lambda_0$.

So, we have $a_{ki}a_{kj}=0$ whenever $i\ne j$ for any $k$. Therefore, if some $a_{ki}\ne 0$, it follows that $a_{kj}=0$ for any $j\ne i$. Given that the matrix of change of basis $(a_{ij})$ is invertible, we conclude that each row and column must have exactly one nonzero entry.
Consequently, there exists a permutation $\sigma$ of the index set $\Lambda$ such that $u_i=a_{ \sigma(i)i}e_{\sigma(i)}$ for every $i$. 
\end{proof}

\begin{corollary}\label{dostroc}
    If $ A $ is an evolution algebra with invertible structure matrix $ M_B $, then any automorphism $ f \colon A \to A $ is of the form $ f(e_i) = k_i e_{\sigma(i)} $ for each $ i $, where $ B = \{e_i\}_{i \in \Lambda} $ and $ \sigma $ is a permutation of $ \Lambda $.
\end{corollary}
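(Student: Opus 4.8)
The plan is to reduce the statement directly to Proposition~\ref{qnu}. The first step is the observation that any automorphism $f \in \aut(A)$ carries the natural basis $B = \{e_i\}_{i \in \Lambda}$ to another natural basis of $A$. Indeed, set $u_i := f(e_i)$ for each $i \in \Lambda$. Then $\{u_i\}_{i \in \Lambda}$ is again a $K$-basis of $A$, since $f$ is a linear isomorphism, and it is natural because $u_i u_j = f(e_i)f(e_j) = f(e_i e_j) = f(0) = 0$ whenever $i \neq j$, using that $f$ is an algebra homomorphism and that $B$ is natural.

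The second step is simply to invoke Proposition~\ref{qnu} with the natural basis $\{u_i\}_{i \in \Lambda}$ just produced. Since $M_B$ is invertible by hypothesis, that proposition yields a bijection $\sigma \colon \Lambda \to \Lambda$ and scalars $\{k_i\}_{i \in \Lambda} \subset K^\times$ with $u_i = k_i e_{\sigma(i)}$ for every $i \in \Lambda$. Unwinding $u_i = f(e_i)$ gives exactly $f(e_i) = k_i e_{\sigma(i)}$, which is the claim.

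There is essentially no obstacle here: the corollary is an immediate consequence of Proposition~\ref{qnu} once one notices that an automorphism sends a natural basis to a natural basis, and this uses only that $f$ is both multiplicative and bijective. If desired, one could additionally remark that $f$ is diagonalizable relative to $B$ (in the sense of Definition~\ref{def_diag}) exactly when the associated permutation $\sigma$ can be taken to be the identity, but this refinement is not needed for the statement itself.
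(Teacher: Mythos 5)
Your proof is correct and follows exactly the route the paper intends: the corollary is stated without proof precisely because it reduces to Proposition~\ref{qnu} once one observes that an automorphism sends the natural basis $B$ to another natural basis. Nothing is missing.
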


\section{The 2LI condition}\label{sec_2LI}

An evolution algebra satisfying 2LI is defined in \cite[Definition 2.8]{natural}. Accordingly, we say that a natural basis \( B \) satisfies 2LI if, for any two distinct elements \( x, y \in B \), the set \( \{x^2, y^2\} \) is linearly independent. In this section, we will show that if there is a natural basis satisfying 2LI, every other natural basis coincides with the first one up to permutation and scalar multiplication. Thus, the group \( \operatorname{Diag}(A) \) does not depend on the choice of basis; see Remark~\ref{vespa}.

Recall that an element of an evolution algebra is called \textit{natural} if it belongs to some natural basis. The characterization of naturality for vectors is provided in \cite{natural}.

\begin{proposition}
    Let $A$ be an evolution $K$-algebra. Suppose that \( \{e_i\}_{i \in \Lambda} \) is a natural basis of \( A \) and let \( u = \lambda_1 e_1 + \cdots + \lambda_k e_k \), where \( \lambda_i \neq 0 \).
    \begin{enumerate}[\rm (i)]
    \item If  $u^2\ne 0$, then:
    \begin{enumerate} 
        \item If $\car(K) \neq 2$ and $\dim(\Span(\{e_i^2\}_{i=1}^k))=1$, then $u$ is a natural vector.
        \item If  $u$ is natural, then $\dim(\Span(\{e_i^2\}_{i=1}^k))=1$.
    \end{enumerate}
    \item If $u^2= 0$ then $u$ is natural if and only if $e_i^2=0$ for $i=1,\ldots,k$.
\end{enumerate}
\end{proposition}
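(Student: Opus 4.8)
The plan is to analyze the structure equation $u^2 = \sum_{i=1}^k \lambda_i^2 e_i^2$ and exploit the fact that, relative to the natural basis $\{e_i\}$, the squares $e_i^2$ are supported on disjoint... no, not disjoint — but the key observation is that $u^2 = \sum_i \lambda_i^2 e_i^2$ lies in $\Span(\{e_i^2\}_{i=1}^k)$, and the dimension of that span controls how many independent "square directions" are available. I will treat the two main cases separately.

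For part (ii), suppose first that $u$ is natural, so $u$ belongs to some natural basis $B'$. Then for every other $v \in B'$ we have $uv = 0$. Writing $u = \sum_{i=1}^k \lambda_i e_i$ with all $\lambda_i \neq 0$, I would first show that $u^2 = 0$ forces $e_i^2 = 0$ for each $i \le k$: indeed $u^2 = \sum_i \lambda_i^2 e_i^2$, and I claim the $e_i^2$ appearing here cannot cancel. To see this, note that if some $e_j^2 \neq 0$ then $e_j^2 = \sum_m \omega_{mj} e_m$ with some $\omega_{mj} \neq 0$, i.e. there is an edge out of $e_j$ in the associated graph; the contribution of $e_j$ to the coefficient of $e_m$ in $u^2$ is $\lambda_j^2 \omega_{mj}$. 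The cancellation $\sum_i \lambda_i^2 \omega_{mi} = 0$ for all $m$ says exactly that $M_B \mu = 0$ where $\mu$ is the vector with entries $\lambda_i^2$ on the support $\{1,\dots,k\}$ and $0$ elsewhere — but wait, this need not be impossible when $M_B$ is not invertible. So instead I would argue via naturality directly: since $\{e_i\}$ is the given natural basis and $e_1$ (say) has $u = \lambda_1 e_1 + \dots$, and $u$ is natural, I would use the characterization of natural vectors from \cite{natural} (cited in the excerpt just before the statement) — this is precisely the result the proposition is unpacking, so the intended proof surely reduces (i) and (ii) to the cited characterization rather than reproving it from scratch.

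Concretely, then, the real plan is: invoke the characterization of natural vectors proved in \cite{natural}, which presumably states (in the relevant form) that $u = \sum_{i=1}^k \lambda_i e_i$ with $\lambda_i \neq 0$ is natural precisely when either $u^2 = 0$ and all $e_i^2 = 0$, or $u^2 \neq 0$ and the $e_i^2$ span a one-dimensional space together with a condition forcing (when $\car K \neq 2$) the square-norms to be adjustable — and then simply match up the two halves of part (i) and the equivalence in part (ii) with the cases of that characterization. For (i)(a): if $\car(K) \neq 2$ and all $e_i^2$ lie on a single line, then $e_i^2 = \alpha_i e_1^2$ (say) with $e_1^2 \neq 0$ (since $u^2 \neq 0$ forces not all $e_i^2$ zero, hence the spanning line is nonzero), and one builds an explicit natural basis containing $u$ by replacing $e_1, \dots, e_k$ with $u, w_2, \dots, w_k$ where the $w_j$ are chosen in $\Span(e_1,\dots,e_k)$ to square to zero and to annihilate $u$ and each other; solvability of the linear system producing the $w_j$ uses $\car(K)\neq 2$ (to extract square roots / solve $\pm$ sign systems over a field where $2 \neq 0$, as in the example computations earlier in the paper). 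For (i)(b): conversely, if $u$ is natural with $u^2 \neq 0$, extend to a natural basis $B'$; then $u^2$ is a scalar multiple of an element of $B'$, while $u^2 = \sum \lambda_i^2 e_i^2 \in \Span(\{e_i^2\}_{i=1}^k)$ — and the independence properties of natural bases force this span to collapse to dimension $1$. For (ii): the forward direction is the $u^2 = 0$ specialization of the same extension argument (a natural element squaring to zero sits in a basis all of whose relevant members, being in its "square-support", must also square to zero by the structure of $E$), and the backward direction is immediate since if $e_i^2 = 0$ for $i \le k$ then $\{u, e_2, \dots, e_k, e_{k+1}, \dots\}$ is readily checked to be a natural basis (all cross products vanish and $u^2 = 0$).

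\textbf{Main obstacle.} The delicate point is part (i)(a): constructing an explicit natural basis through $u$ when only a one-dimensional span hypothesis on the $\{e_i^2\}$ is available. This requires solving, inside the finite-dimensional subspace $\Span(e_1,\dots,e_k)$, for vectors $w_2,\dots,w_k$ that square to zero and are mutually orthogonal (in the evolution-algebra sense) and orthogonal to $u$ — and the hypothesis $\car(K) \neq 2$ is exactly what makes the resulting system (essentially a Gram-type orthogonalization with respect to the rank-one "form" $(x,y) \mapsto$ coefficient of the spanning vector in $xy$) solvable. I expect this to be where the citation to \cite{natural} does the heavy lifting; if one instead wants a self-contained argument, the bookkeeping of which $\omega_{ij}$ vanish, organized via the graph $E$, is the step needing care. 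The remaining implications, (i)(b) and both directions of (ii), I expect to be short once the independence properties of natural bases (change-of-basis constraints as in Proposition~\ref{qnu}) are in hand.
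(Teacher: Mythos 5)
Your primary strategy --- to ``invoke the characterization of natural vectors proved in \cite{natural} \dots and then simply match up'' --- is not viable here, because this proposition \emph{is} that characterization: the paper proves it from scratch precisely in order to correct item (i) of the cited theorem (the remark immediately after the proof gives a characteristic-$2$ counterexample and concludes that the hypothesis $\car(K)\ne 2$ must be \emph{added} to the statement in \cite{natural}). Citing the source would therefore be circular, and for part (i)(a) it would be citing a statement that is false as written. The paper's actual proof of (i)(a) is close to your fallback sketch: on $S=\Span(\{e_i\}_{i=1}^k)$ the product is $xy=\langle x,y\rangle e_1^2$ for a symmetric bilinear form, $u^2\ne 0$ gives $\langle u,u\rangle\ne 0$, hence $S=Ku\perp(Ku)^\perp$, and $\car(K)\ne 2$ yields an orthogonal basis $\{v_j\}$ of $(Ku)^\perp$, so that $\{u\}\cup\{v_j\}\cup\{e_i\}_{i>k}$ is natural. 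But your version imposes the extra condition that the replacement vectors $w_j$ ``square to zero,'' which is neither needed for naturality (only $w_iw_j=0$ for $i\ne j$ and $uw_j=0$ are required; the squares of basis vectors are unconstrained) nor generally achievable: take $e_1^2=e_2^2\ne 0$ and $u=e_1+e_2$, so that $(Ku)^\perp=K(e_1-e_2)$ contains no nonzero vector of square zero, yet $\{u,\,e_1-e_2\}$ is a perfectly good natural basis. As written, your construction gets stuck on a condition that cannot be met.

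For (i)(b) you say the ``independence properties of natural bases force this span to collapse to dimension $1$,'' but you never name the mechanism (and the intermediate claim that $u^2$ is a scalar multiple of an element of $B'$ is not true in general). The paper's one-line argument is that for a natural $u$ the image of the left-multiplication operator $L_u$ is spanned by $u^2$ alone, since $u$ annihilates every other member of a natural basis through $u$, while $\operatorname{Im}(L_u)=\Span(\{e_1^2,\dots,e_k^2\})$ because $ue_i=\lambda_ie_i^2$; hence that span is one-dimensional when $u^2\ne 0$. The same identity $uA=0$ gives the forward direction of (ii) directly ($\lambda_ie_i^2=ue_i=0$), which your sketch only gestures at; the backward direction of (ii) you have correctly.
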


   \begin{proof}
       Assume that $u^2\neq 0$, $\car{K} \neq 2$ and  $\dim(\Span(\{e_i^2\}_{i=1}^k))=1$. Let \( S = \Span(\{e_i\}_{i=1}^k) \). We suppose, without loss of generality, that $e_1^2$ generates $\Span(\{e_i^2\}_{i=1}^k)$. There exists a  symmetric bilinear form \( \langle \cdot, \cdot \rangle \) defined on \( S \) such that 
\(
xy = \langle x, y \rangle e_1^2.
\)
Since \( \langle u, u \rangle \neq 0 \), we have \( S = Ku \perp (Ku)^\perp \). Furthermore, as the characteristic of $K$ is not $2$, \( (Ku)^\perp \) has an orthogonal basis \( \{v_j\}_{j=1}^{k-1} \). Consequently, there exists a natural basis \( \{u\} \cup \{v_j\}_{j=1}^{k-1} \cup \{e_i\}_{i > k} \) that contains \( u \). 

For item (1) (b), if \( u \) is a natural vector, then \( \dim(\operatorname{Im}(L_u)) = 1 \) where $L_u$ is the left-multiplication operator. On the other hand, \( \operatorname{Im}(L_u) \) is generated by \( u e_i \) for \( i \in \Lambda \), that is, 
\(
\operatorname{Im}(L_u) = \Span(\{e_1^2, \ldots, e_k^2\}).
\)
Hence, \( \dim (\Span(\{e_i^2\}_{i=1}^k)) = 1 \).

To show item (2), start with $u$ being natural. Then \( uA = 0 \), and thus \( ue_i = 0 \), which implies \( e_i^2 = 0 \) for \( i = 1, \dots, k \). Conversely, if each $e_i^2=0$ then $\{u\}\cup\{e_i\}_{i=1}^{k-1}\cup\{e_i\}_{i>k}$ is a natural basis containing $u$.
   \end{proof}

\begin{remark} 
The hypothesis that $\car(K) \neq 2$ in the first item above can not be dropped. Consider the evolution algebra $K^3$ with $K$ a field of characteristic $2$. In addition, if we suppose that the canonical basis $\{e_1,e_2,e_3\}$ of $K^3$ has products $e_1^2=e_2^2=e_3^2\ne 0$ and $e_ie_j=0$ when $i\ne j$, then we obtain a family of evolution algebras. In any of these algebras, take $u=e_1+e_2+e_3=(1,1,1)$ whose square is $u^2=e_1^2+e_2^2+e_3^2=3e_1^2=e_1^2\ne 0$. Also, $\hbox{supp}(u)=\{1,2,3\}$ and $\dim(\Span\{e_1^2,e_2^2,e_3^2\})= 1$. However, $u$ is not natural. If $\{(1,1,1), (x,y,z), (r,s,t)\}$ is a natural basis, we must have
$$\begin{cases}x+y+z=0\cr r+s+t=0\cr xr+ys+zt=0\end{cases},\quad \begin{vmatrix}1 & 1 & 1\cr x & y & z\cr r & s & t\end{vmatrix}\ne 0$$ 
which is not consistent in $\car K =2$. Indeed, as $z= -x-y$ and $t=-r-s$, then replacing in the third equation and the determinant, we have $2 r x + s x + r y + 2 s y = sx+ry = 0$ and $3 s x - 3 r y =sx+ry\neq 0$,  a contradiction.
\end{remark}
Thus, in item (i) of \cite[Theorem 2.4]{natural}, the hypothesis that $\hbox{char}(K)\ne 2$ must be added.


As noted in Proposition \ref{qnu}, if the structure matrix of \( A \) is invertible, then any two natural bases coincide up to permutation and scalar multiplication. Next, we generalize \cite[Corollary 2.7]{natural}.

\begin{theorem} \label{2LI}
    Let $A$ be an evolution algebra (arbitrary dimension and ground field). If $A$ has  a natural basis satisfying the condition 2LI, then all the bases coincide up to permutation and scalar multiplication.
\end{theorem}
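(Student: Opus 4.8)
The plan is to reduce everything to showing that each element $u_j$ of an arbitrary natural basis $B'=\{u_j\}_{j\in\Lambda'}$ has singleton support with respect to the distinguished basis $B=\{e_i\}_{i\in\Lambda}$. Once this is known, writing $u_j=k_j e_{\sigma(j)}$ with $k_j\in K^\times$ and $\sigma(j)\in\Lambda$, the assignment $\sigma\colon\Lambda'\to\Lambda$ is automatically a bijection: it is injective because $B'$ is linearly independent, and surjective because $B'$ spans $A$, so that $\{e_i : i\in\sigma(\Lambda')\}$ spans $A$. That is exactly the desired conclusion.

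Two elementary remarks about the hypothesis do the work. First, 2LI forces $e_i^2\neq 0$ for every $i\in\Lambda$, since a set containing $0$ is never linearly independent (we may clearly assume $|\Lambda|\ge 2$, the one-dimensional case being trivial). Second, for any finite $F\subseteq\Lambda$ with $|F|\ge 2$ we have $\dim\Span\{e_i^2 : i\in F\}\ge 2$, because $F$ contains two distinct indices and 2LI makes the corresponding pair of squares linearly independent.

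Now fix $j\in\Lambda'$, put $F_j=\sop(u_j)$ (a finite, nonempty set) and write $u_j=\sum_{i\in F_j}\lambda_i e_i$ with $\lambda_i\in K^\times$. The key identity is that the image $u_jA$ of left multiplication by $u_j$ equals $\Span\{e_i^2 : i\in F_j\}$, which holds because $u_j e_i=\lambda_i e_i^2$ while $e_ie_{i'}=0$ for $i\neq i'$. On the other hand, since $u_j$ lies in the natural basis $B'$ we have $u_ju_{j'}=0$ for all $j'\neq j$, so $u_jA=Ku_j^2$. Comparing the two descriptions: if $u_j^2=0$ then $\Span\{e_i^2 : i\in F_j\}=0$, forcing $e_i^2=0$ for $i\in F_j$ and contradicting the first remark; if $u_j^2\neq 0$ then $\dim\Span\{e_i^2 : i\in F_j\}=1$, which by the second remark forces $|F_j|=1$. (Equivalently, one may invoke items (1)(b) and (2) of the preceding Proposition, noting that item (1)(b) requires no restriction on $\car(K)$, which is why the statement holds for an arbitrary ground field.) In either case $u_j$ has singleton support, and the reduction above concludes the proof.

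There is no serious obstacle here: the mechanism is simply that naturality of $u_j$ collapses $u_jA$ to a line (or to $0$), while 2LI forbids the squares of two distinct basis vectors from lying on a common line, so a natural vector cannot be spread over two or more vectors of $B$. The only points requiring a moment's care are the degenerate possibilities $u_j^2=0$ and $\dim A=1$, both handled at once by the observation that 2LI makes every $e_i^2$ nonzero.
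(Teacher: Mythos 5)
Your proof is correct and follows essentially the same route as the paper's: naturality of $u_j$ forces $\dim(\im(L_{u_j}))\le 1$ while $\im(L_{u_j})=\Span\{e_i^2 : i\in\sop(u_j)\}$, and 2LI then forces the support to be a singleton. You are somewhat more careful than the paper in explicitly ruling out the degenerate case $u_j^2=0$ (via the observation that 2LI forces $e_i^2\neq 0$) and in spelling out why the resulting map $\sigma$ is a bijection, but the core mechanism is identical.
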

\begin{proof}
Assume that $B=\{e_i\}_{i\in\Lambda}$ is a natural basis satisfying 2LI and $B'$ is any other
natural basis. Let $u\in B'$ and write, without loss of generality, $u=\l_1e_1+\cdots+\l_ke_k$ with $\l_i\ne 0$.
Since $u$ is natural $\dim(\hbox{Im}(L_u))\le 1$, but $\hbox{Im}(L_u)=\Span(\{e_i^2\}_{i=1}^k)$. Consequently, if $k>1$, the dimension of $\hbox{Im}(L_u)\ge 2$, which is a contradiction. This implies that $k=1$ and hence $u\in K^\times B$. Thus, any element of $B'$ is a nonzero multiple of some element in $B$. 

\end{proof}

As noted in \cite{natural}, if \( A \) has a unique basis (up to reordering and scalar multiplication), it does not automatically follow that it satisfies 2LI. The first additional condition required for \( A \) is nondegeneracy. Moreover, we must assume that the ground field has more than three elements. Under these assumptions, the proof of \cite[Corollary 2.7]{natural} gives that:

\begin{proposition}Let $A$ be a nondegenerate evolution algebra of arbitrary dimension over a field with more than $3$ elements such that all the natural bases coincide up to permutation and scalar multiplication. Then all the natural bases satisfy 2LI.
\end{proposition}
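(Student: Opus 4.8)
The plan is to argue by contraposition. Suppose some natural basis $B=\{e_i\}_{i\in\Lambda}$ of $A$ fails the 2LI condition; I will build from it a natural basis $B'$ which is visibly \emph{not} a permutation-and-rescaling of $B$, contradicting the hypothesis on $A$. This is exactly the strategy of \cite[Corollary 2.7]{natural}; what needs checking is that nothing in it uses finite-dimensionality, and that the bound $|K|>3$ is consumed at the right spot.

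\emph{Step 1: locate and normalise the failure of 2LI.} By assumption there are distinct $e_1,e_2\in B$ with $\{e_1^2,e_2^2\}$ linearly dependent. Here I would invoke nondegeneracy: for an element $e$ of a natural basis, $eA=0$ if and only if $e^2=0$, so nondegeneracy of $A$ guarantees $e_1^2\neq 0$ and $e_2^2\neq 0$. Hence the dependence reads $e_2^2=\lambda e_1^2$ for a unique $\lambda\in K^\times$.

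\emph{Step 2: build the competing basis.} For $a,b\in K^\times$ set $u=e_1+ae_2$ and $v=e_1+be_2$. Using $e_1e_2=0$ and $e_ie_j=0$ for $i\neq j$, a short computation gives $uv=(1+ab\lambda)e_1^2$ and $ue_i=ve_i=0$ for $i\notin\{1,2\}$. Choosing $b=-(a\lambda)^{-1}$ kills the product $uv$, and provided $a\neq b$ the family $B'=\{u,v\}\cup\{e_i\}_{i\notin\{1,2\}}$ spans $A$ and is linearly independent (the $2\times2$ matrix expressing $u,v$ in the basis $e_1,e_2$ has determinant $b-a$), so $B'$ is a natural basis. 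The condition $a\neq b$ unwinds to $a^2\neq-\lambda^{-1}$, and \emph{this single inequality is the crux of the argument and the only place the hypothesis on $|K|$ is used}: the equation $a^2=-\lambda^{-1}$ has at most two solutions in $K$, while $|K|>3$ forces $|K^\times|\geq 3$, so an admissible $a\in K^\times$ exists. (For $|K|\leq 3$ the construction genuinely fails — e.g.\ over $\mathbb{F}_3$ with $\lambda=-1$ every nonzero $a$ gives $a=b$ — which is why the size bound cannot be removed.)

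\emph{Step 3: conclude.} The vector $u=e_1+ae_2$ has support $\{1,2\}$ relative to $B$ with both coefficients nonzero, hence is not a scalar multiple of any element of $B$. Therefore $B'$ does not coincide with $B$ up to permutation and scalar multiplication, contradicting the standing assumption on $A$. Consequently every natural basis of $A$ satisfies 2LI, as claimed. The only genuinely subtle point in the whole proof is the choice of $a$ in Step 2; the rest is bookkeeping (and, implicitly, the standard facts about nondegeneracy used in Step 1).
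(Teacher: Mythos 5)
Your proof is correct and is essentially the argument the paper itself relies on: the paper offers no proof of its own but defers to the proof of Corollary 2.7 of the cited reference, which is exactly the perturbation construction you carry out (nondegeneracy forces $e_2^2=\lambda e_1^2$ with $\lambda\in K^\times$, and one replaces $e_1,e_2$ by $e_1+ae_2$, $e_1+be_2$ with $ab\lambda=-1$ and $a\neq b$ to get a natural basis not related to $B$ by permutation and scaling). Your bookkeeping of where the hypotheses enter also matches the paper's explicit remark that the condition $|K|>3$, absent from the cited statement, is consumed in that proof --- precisely at the choice of $a$ with $a^2\neq-\lambda^{-1}$.
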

Note that the hypothesis on the cardinality of the ground field is not present in the statement of \cite[Corollary 2.7]{natural}; however, it is used in its proof.

\section{Non-diagonalizable automorphisms}\label{sec_non_diagonalizable}
Let $\mathcal{B}$ be the set of all natural bases of a fixed evolution $K$-algebra $A$. We assume all natural bases are indexed by the same set $\Lambda$. Then we define the direct product group $S_\Lambda \times (K^\times)^\Lambda$ with pointwise multiplication, where $S_\Lambda$ is the group of bijections $\Lambda \to \Lambda$ (under composition) and $(K^\times)^\Lambda$ is the group of maps $\Lambda \to K^\times$ with elements represented in the form $(x_i)_{i \in \Lambda}$, where $x_i \in K^\times$ (with pointwise multiplication).
This group has a natural action $(S_\Lambda \times (K^\times)^\Lambda) \times \mathcal{B} \to \mathcal{B}$ defined as follows: if $\{e_i\}_{i \in \Lambda}$ is a natural basis of $A$ and $(\sigma, (x_i)_{i \in \Lambda}) \in S_\Lambda \times (K^\times)^\Lambda$, then
$$(\s,(x_i)_{i\in\Lambda})\cdot\{e_i\}_{i\in \Lambda}:=\{x_i e_{\s(i)}\}_{i \in \Lambda}$$
\begin{definition}  
The set of orbits on $\mathcal{B}$ induced by the action of the group $S_\Lambda \times (K^\times)^\Lambda$ will be denoted by $\overline{\mathcal{B}}$. For a given natural basis $B$, its orbit under this group will be denoted by $[B]$.
\end{definition}

\begin{remark}\label{vespa}
Under the previous settings, we have:

\begin{enumerate}[\rm (i)]
    \item If two basis $B,B'\in\B$ of an evolution algebra $A$ are in the same orbit under the action of $S_\Lambda\times (K^\times)^\Lambda$, then \begin{equation}\label{Bpedni}\diag(A;B)=\diag(A;B').\end{equation} 
    \item If $A$ satisfies 2LI, then the action of  $S_\Lambda\times (K^\times)^\Lambda$ is transitive. Consequently, there is only one orbit.
\end{enumerate}
\end{remark}
If the set of orbits $\overline\B$ has cardinal $1$, then we can speak of the group $\diag(A)$ without further allusion to any particular $B\in\overline\B$.

\begin{example}  
    Consider the $2$-dimensional evolution algebra with natural basis $\{e_1,e_2\}$ such that $e_1^2=e_1$ and $e_2^2=0$. Its associated graph corresponds to $E$.
    \vspace{-0.1cm}
               \begin{figure}[h]
        \centering
        \begin{tikzpicture}[shorten <=3pt,shorten >=3pt,>=latex,node distance={20mm}, main/.style = {draw, fill, circle, inner sep = 1pt}, sub/.style = {draw = Red, fill = Red, circle, inner sep = 1pt}]
\node (3) at (-1.5,0.5) {$E:$};
\node[main,label=above:$e_1$] (1) at (0,0) {}; 
\node[main,label=above:$e_2$] (2) at (1,0) {};

\draw[->] (1) to [out = 140, in = 210, looseness = 40] (1);

\end{tikzpicture}
    \end{figure}
 \vspace{-0.5cm}
    
    Let us search for all the possible natural bases (modulo the action of $S_2\times(K^\times)^2$).
    If $\{u_1,u_2\}$ is another natural basis, then we can write $u_1=xe_1+ye_2$ and $u_2=ze_1+t e_2$ ($x,y,z,t\in K$). Thus, $0=u_1u_2$ gives $xz=0$. Modulo the action of $S_2\times(K^\times)^2$, we may assume $z=0$. Since $u_1$ and $u_2$ are linearly independent, $xt\ne 0$, and hence the orbits of possible natural bases are of the form 
    
    $$\begin{cases}u_1= e_1+y e_2\\ u_2=e_2.\end{cases}$$
Now, if $y=0$ then the basis $\{u_1,u_2\}$ is in the orbit of $\{e_1,e_2\}$, but if $y\ne 0$ then they are in different orbits.
It is also easy to realize that different values of $y\in K$ produce different orbits. So,
\begin{equation*}
    \overline\B= \left\{\left [\{e_1+y e_2,e_2\}\right ] \colon y \in K\right\}.
\end{equation*}
\end{example}

\begin{proposition}
Given  an evolution algebra $A$, if  $[B], [B'] \in \overline\B$ with $[B]= [B']$, then the graphs associated to $A$ relative to $B$ and $B'$ are isomorphic. 
\end{proposition}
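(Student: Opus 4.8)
The plan is to show that if $[B]=[B']$ then there is an element $(\sigma,(x_i)_{i\in\Lambda})\in S_\Lambda\times(K^\times)^\Lambda$ taking the basis $B=\{e_i\}_{i\in\Lambda}$ to the basis $B'=\{e'_i\}_{i\in\Lambda}$, i.e. $e'_i=x_ie_{\sigma(i)}$, and then to verify that the permutation $\sigma$ induces a graph isomorphism $E_B\to E_{B'}$, where $E_B$ (resp.\ $E_{B'}$) denotes the graph associated to $A$ relative to $B$ (resp.\ $B'$). Since by construction $E_B^0=B$ and $E_{B'}^0=B'$, and both are indexed by $\Lambda$, the natural candidate for the isomorphism on vertices is $e_i\mapsto e'_{\sigma^{-1}(i)}$ (equivalently $e_{\sigma(i)}\mapsto e'_i$); on edges it must then be the induced bijection.

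The key computational step is to compare structure matrices. Writing $M_B=(\omega_{ij})$ and $M_{B'}=(\omega'_{ij})$ with $e_i^2=\sum_k\omega_{ki}e_k$ and $(e'_i)^2=\sum_k\omega'_{ki}e'_k$, I would substitute $e'_i=x_ie_{\sigma(i)}$ into $(e'_i)^2$. Since $B$ is a natural basis, $(e'_i)^2=x_i^2 e_{\sigma(i)}^2=x_i^2\sum_k\omega_{k\sigma(i)}e_k$, and rewriting each $e_k$ as $x_{\sigma^{-1}(k)}^{-1}e'_{\sigma^{-1}(k)}$ gives $(e'_i)^2=\sum_k x_i^2\,x_{\sigma^{-1}(k)}^{-1}\,\omega_{k\sigma(i)}\,e'_{\sigma^{-1}(k)}$. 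Reindexing $k=\sigma(j)$ yields $\omega'_{ji}=x_i^2\,x_j^{-1}\,\omega_{\sigma(j)\sigma(i)}$. The crucial consequence is that $x_i^2x_j^{-1}\in K^\times$, so $\omega'_{ji}\neq 0$ if and only if $\omega_{\sigma(j)\sigma(i)}\neq 0$. By the definition of the associated graph (an edge from $e_j$ to $e_i$ exactly when $\omega_{ij}\neq 0$), this says precisely that there is an edge $e'_i\to e'_j$ in $E_{B'}$ if and only if there is an edge $e_{\sigma(i)}\to e_{\sigma(j)}$ in $E_B$.

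From here I would just package this into a morphism of graphs in the sense used in the paper. Define $\phi^0\colon E_B^0\to E_{B'}^0$ by $\phi^0(e_{\sigma(i)})=e'_i$; it is a bijection since $\sigma$ is a bijection and since all $x_i\neq 0$ so that $B'$ as reindexed really does have $e'_i$ in slot $i$. The edge map $\phi^1$ sends the unique(?) edge $e_{\sigma(i)}\to e_{\sigma(j)}$ to the edge $e'_i\to e'_j$; the displayed equivalence of nonvanishing structure constants shows this is well defined and bijective on $E^1$, and it is visibly compatible with source and range. Thus $\phi=(\phi^0,\phi^1)$ is a graph isomorphism $E_B\cong E_{B'}$.

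The main obstacle is bookkeeping rather than depth: one must keep the index conventions straight (the structure matrix uses $e_i^2=\sum_j\omega_{ji}e_j$, so the graph has an edge $e_j\to e_i$ when $\omega_{ij}\neq 0$, which is a transpose-like convention), and one must confirm the associated graph of $A$ relative to a basis is constructed so that multiple edges between the same ordered pair of vertices do not occur for these algebras (or, if they can, treat the edge multiplicities, which here are always $0$ or $1$ by the way $E^1$ is defined via the single condition $\omega_{ij}\neq 0$). Once the convention is pinned down, the identity $\omega'_{ji}=x_i^2x_j^{-1}\omega_{\sigma(j)\sigma(i)}$ does all the work, since scaling by nonzero constants preserves the support of the structure matrix and hence the edge set of the graph.
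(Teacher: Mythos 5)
Your proposal is correct and follows essentially the same route as the paper: both write the basis change as $e'_i=x_ie_{\sigma(i)}$, compute the transformed structure constants (your identity $\omega'_{ji}=x_i^2x_j^{-1}\omega_{\sigma(j)\sigma(i)}$ is exactly the paper's $e_{\sigma(i)}^2=\frac{\lambda_j\omega_{ji}}{\lambda_i^2}e_{\sigma(j)}+\cdots$), and conclude that nonvanishing of entries, hence the edge set, is preserved. Your extra care in spelling out the vertex and edge maps of the graph isomorphism is harmless; note that multiple edges never arise since the associated graph has at most one edge per ordered pair of vertices by construction.
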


\begin{proof}
 Let $B=\{e_i\}$ and $B'=\{u_i\}$ be two natural bases with  $[B]= [B']$. We can write $u_i^2=\omega_{ji}u_j + C$ with $\omega_{ji}\neq 0$ and $C u_j=0$. On the other hand, we have that $u_i=\lambda_ie_{\sigma(i)}$ for some $\lambda_i \in K^{\times}$ and $\sigma \in S_\Lambda$. Therefore, $\lambda_i^2e_{\sigma(i)}^2=\omega_{ji}u_j + C$. This implies that $e_{\sigma(i)}^2=\frac{\lambda_j \omega_{ji}}{\lambda_i^2}e_{\sigma(j)} + \frac{C}{\lambda_i^2}$ with $\frac{C}{\lambda_i^2}e_{\sigma(j)}=0$. Hence, the graphs associated to A relative to $B$ and $B'$ are isomorphic.

\end{proof}

\begin{definition}  
    We define $\E_K$ as the category of pairs $(A, B)$ where $A$ is an evolution $K$-algebra and $B$ is a natural basis of $A$. A homomorphism from $(A,B)$ to $ (A',B')$ in $\E_K$ is a $K$-algebra isomorphism $t\colon A\to A'$ such that for any $b\in B$, there is some $b'\in B'$ with $t(b)\in Kb'$.  We will use
the notation $\aut_{\E_K}(A,B)$ for the group of all automorphisms of the object $( A, B)$ of $\E_K$. If there is no ambiguity, we will shorten the notation $\aut_{\E_K}(A, B)$ to $\aut(A, B)$.
\end{definition}

So fa,r we have considered automorphisms that are diagonalizable relative to a natural basis of the given evolution algebra. But in some cases the "symmetry" of the graph associated to the evolution algebra permits "twisted" automorphisms. To develop further related results, we need the notion of a $K$-{\it weighted graph}. This is a pair $(E,w)$, where $E$ is a graph and $w\colon E^1\to K^\times$. Typically, in applications, the field $K$ is taken to be the real numbers, but in our case, we consider an arbitrary field $K$.

\begin{remark}\label{laud}   
There is a biunivocal correspondence between pairs $(A, B)$, where $A$ is an evolution $K$-algebra and $B$ is a natural basis, and weighted graphs $(E, w)$ with $E$ satisfying Condition (Sing). 
Given an evolution algebra $A$ with a natural basis $B = \{u_i\}_{i \in \Lambda}$, we construct the graph $E$ such that $E^0 = B$, and there is exactly one edge from $u_i$  to $u_j$ if and only if 
with $\omega_{ji} \neq 0$. We define the weight $w\colon E^1 \to K^\times$ by setting $w(f) = \omega_{ji}$ if $f \in  s^{-1}(u_i) \cap r^{-1}(u_j)$.
Conversely, given a weighted $K$-graph $(E, w)$ satisfying Condition~(Sing), we can construct an evolution $K$-algebra with natural basis $E^0$ and multiplication defined by 
\[
e_i^2 = \sum_{f \in s^{-1}(e_i)} w(f) r(f)
\]
for each $i \in \Lambda$.
\end{remark}

We find ourselves searching for a  description of the category of weighted graphs in such a way that it is isomorphic to the category $\E_K$. Considering $(A,B)$ and $(A',B')$ with $B = \{e_i\}_{i \in \Lambda}$ and $B' = \{e_i'\}_{j\in \Lambda'}$, notice that if $\theta \colon (A,B) \to (A',B')$ is a morphism in $\E_K$, then
\begin{equation*}
    \theta(e_i^2) = \theta\left (\sum_{k} \omega_{ki}e_k\right ) = \sum_{k} \omega_{ki} \theta(e_k) = \sum_{k} \omega_{ki} x_ke_{\sigma(k)}',
\end{equation*}
\begin{equation*}
    \theta(e_i)^2 = x_i^2e_{\sigma(i)}'^2 = x_i^2\sum_{k}\omega'_{k\sigma(i)} e_k' 
\end{equation*}
and
\begin{equation*}
    x_i^2\omega_{\sigma(k)\sigma(i)}' = x_k\omega_{ki}
\end{equation*}
for every $k \in \Lambda$. Using the notation given in Remark~\ref{laud}, this motivates the construction of the ideal $I_\theta$ in the next definition.

\begin{definition}  
We define the category $\graph_K$ whose objects are the weighted graphs $(E,w)$. To define homomorphisms, consider
two weighted graphs $(E,w)$ and $(E',w')$. Denote the elements of $E^0$ by  $\{e_i\}_{i\in\Lambda}$.
Take the polynomial algebra $K[x_i,y_i\colon i\in\Lambda]$ and, for any homomorphism $\theta\colon E\to E'$ in $\graph$, define  the ideal $I_\theta\triangleleft K[x_i,y_i\colon i\in\Lambda]$ as the one generated by all polynomials in the set:
\begin{equation}\label{ahi}
S:=\{x_iy_i-1\}_{i\in\Lambda}\cup\{w(a)x_j-w'(\theta(a))x_i^2\colon a\in E^1\cap s^{-1}(e_i)\cap r^{-1}(e_j)\}.
\end{equation}
So,  the zero-loci of the ideal $I_\theta$ is the set $V(I_\theta)$ given by
$$\left\{((x_i),(y_i))\in (K^\Lambda)^2\colon
x_iy_i=1, w(a)x_j=w'(\theta(a))x_i^2, \ \forall a\in s^{-1}(e_i)\cap r^{-1}(e_j) \right\}.$$
A homomorphism $(E,w)\to (E',w')$ in $\graph_K$  is defined to be a morphism $\theta_\vec$, where $\theta_\vec\colon E\to E'$ is a graph isomorphism and $\vec=(x_i)_{i\in\Lambda}\in (K^\times)^\Lambda$ is such that $((x_i),(x_i^{-1}))\in V(I_{\theta_\vec})$.
\end{definition}

Note that if $\theta\colon E\to E'$ is such that $w'(\theta(a))=w(a)$ for 
any arrow in $E^1$, then define $\vec:=(x_i)$ where $x_i=1$ (for all $i$). Hence $(\vec,\vec)$ is in $V(I_{\theta})$ and $\theta=\theta_\vec$.

We will show that the categories $\E_K$ and $\graph_K$ are isomorphic, but we need two auxiliary results for this.

\begin{lemma}\label{previous}
  Let $(A,B), (A',B')\in\E_K$ with $B=\{u_i\}_{i\in\Lambda}$ and $B'=\{v_i\}_{i\in\Lambda'}$ and consider $t\colon (A,B)\to (A',B')$ a homomorphism in $\E_K$ such that
  $t(u_i)=x_iv_{\s(i)}$ ($i\in\Lambda$), where $x_i\in K^\times$ and $\s\colon\Lambda\to\Lambda'$. If $(E,w)$ 
  and $(E',w')$ are the weighted graphs associated to $(A,B)$ and $(A',B')$ respectively, then there is a homomorphism $\theta_\vec\colon (E,w)\to (E',w')$ such that $\theta_\vec(u_i)=v_{\s(i)}$.
\end{lemma}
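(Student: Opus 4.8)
The plan is to produce the candidate morphism $\theta_{\mathbf v}$ directly from the data $(\sigma,(x_i))$ already supplied by $t$, and then verify that it satisfies the two requirements in the definition of a morphism in $\graph_K$: namely that $\theta_{\mathbf v}\colon E\to E'$ is a graph isomorphism, and that $((x_i),(x_i^{-1}))\in V(I_{\theta_{\mathbf v}})$. First I would set $\mathbf v=(x_i)_{i\in\Lambda}\in(K^\times)^\Lambda$, which makes sense because $t$ is an isomorphism in $\E_K$ and hence sends each basis vector to a nonzero scalar multiple of a basis vector, so each $x_i\neq 0$. The underlying graph map $\theta$ is forced: on vertices it must be $u_i\mapsto v_{\sigma(i)}$ (identifying $E^0=B$, $E'^0=B'$ as in Remark~\ref{laud}), and on edges it is determined by Condition~(Sing), since there is at most one edge between any ordered pair of vertices.

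Next I would check that $\theta$ is a graph isomorphism. Because $t$ is an algebra isomorphism, $\sigma$ is a bijection $\Lambda\to\Lambda'$; the content is that $\sigma$ respects adjacency. Writing $u_i^2=\sum_k\omega_{ki}u_k$ and $v_j^2=\sum_k\omega'_{kj}v_k$, apply $t$ to the relation $u_i^2=\sum_k\omega_{ki}u_k$: on one side $t(u_i^2)=t(u_i)^2=x_i^2 v_{\sigma(i)}^2=x_i^2\sum_k\omega'_{k\sigma(i)}v_k$, and on the other side $t\!\left(\sum_k\omega_{ki}u_k\right)=\sum_k\omega_{ki}x_k v_{\sigma(k)}$. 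Comparing coefficients of $v_{\sigma(j)}$ gives
\[
x_i^2\,\omega'_{\sigma(j)\sigma(i)}=x_j\,\omega_{ji}
\]
for all $i,j$. Since the $x_i$ are units, $\omega_{ji}\neq 0$ if and only if $\omega'_{\sigma(j)\sigma(i)}\neq 0$; translating through the dictionary of Remark~\ref{laud} (an edge $e_i\to e_j$ exists precisely when $\omega_{ji}\neq 0$), this says exactly that $\theta$ sends edges to edges bijectively, i.e.\ $\theta$ is an isomorphism of graphs, with the obvious inverse built from $\sigma^{-1}$.

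Finally I would verify $((x_i),(x_i^{-1}))\in V(I_{\theta})$. The relations $x_iy_i=1$ hold by construction with $y_i=x_i^{-1}$. For the remaining generators of $S$ in \eqref{ahi}: an arrow $a\in s^{-1}(e_i)\cap r^{-1}(e_j)$ exists exactly when $\omega_{ji}\neq 0$, and then $w(a)=\omega_{ji}$ while $w'(\theta(a))=\omega'_{\sigma(j)\sigma(i)}$; the displayed identity $x_i^2\,\omega'_{\sigma(j)\sigma(i)}=x_j\,\omega_{ji}$ is then precisely $w(a)x_j=w'(\theta(a))x_i^2$. Hence $\theta_{\mathbf v}$ is a well-defined morphism $(E,w)\to(E',w')$ in $\graph_K$ and by construction $\theta_{\mathbf v}(u_i)=v_{\sigma(i)}$.

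The only genuinely delicate point is the bookkeeping in the coefficient comparison: one must be careful that the sums $u_i^2=\sum_k\omega_{ki}u_k$ are (column-)finite so that applying $t$ termwise and re-indexing by $\sigma$ is legitimate even in infinite dimension, and that matching the coefficient of a fixed basis vector $v_{\sigma(j)}$ is valid because $\{v_k\}$ is a basis. Everything else is routine, so I would state these verifications briefly rather than expand them in full.
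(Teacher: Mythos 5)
Your proposal is correct and follows essentially the same route as the paper's proof: define $\theta$ on vertices by $u_i\mapsto v_{\sigma(i)}$, apply $t$ to the relation defining $u_i^2$ to extract the identity $x_i^2\,\omega'_{\sigma(j)\sigma(i)}=x_j\,\omega_{ji}$, use it both to see that $\theta$ carries edges to edges (so is a graph isomorphism) and to verify $((x_i),(x_i^{-1}))\in V(I_\theta)$. The only cosmetic difference is that the paper isolates one edge at a time via the decomposition $u_i^2=w(a)u_j+R$ with $u_jR=0$, whereas you compare coefficients in the full (finitely supported) sum; these are the same computation.
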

\begin{proof}
Let $t$ be the same as the statement of the lemma. Define $\theta\colon E^0\to E'^0$ by $\theta(u_i)=v_{\s(i)}$ for any $i$. If $a\in E^1\cap s^{-1}(u_i)\cap r^{-1}(u_j)$,
we have $u_i^2=w(a)u_j+R$ with $w(a)\ne 0$ and $u_jR=0$. Thus, $t(u_i)^2=w(a)t(u_j)+t(R)$, that is, $x_i^2 v_{\s(i)}^2=w(a)x_jv_{\s(j)}+t(R)$.  Equivalently,
\begin{equation}\label{wellington}
v_{\s(i)}^2=w(a)\frac{x_j}{x_i^2}v_{\s(j)}+\frac{t(R)}{x_i^2},
\end{equation} where $v_{\sigma(j)}t(R) = 0$ because $u_jR=0$. So, there is an  edge in $E'^1$
connecting $v_{\s(i)}$ to $v_{\s(j)}$. We formally define $\theta\colon E^1\to E'^1$ by declaring $\theta(a)$ as the  edge connecting $v_{\s(i)}$ to $v_{\s(j)}$. Note that,
by construction, $s(\theta(a))=\theta(s(a))$ and $r(\theta(a))=\theta(r(a))$.  Thus, $\theta$ is an
isomorphism between $E$ and $E'$ in $\graph$.
On the other hand, by comparing \eqref{wellington} with 
\( v_{\s(i)}^2 = w'(\theta(a))v_{\s(j)} + R' \) 
(where we also have \( v_{\s(j)}R' = 0 \)), we obtain 
\( w(a)\frac{x_j}{x_i^2} = w'(\theta(a)) \), which implies $((x_i),(x_i^{-1}))
\in V(I_\theta)$.
 So, $\theta_\vec:= \theta$  is a homomorphism in $\graph_K$ with $\vec = (x_i)$.
\end{proof}

By Lemma \ref{previous}, we can define a functor \(\mathcal{F} \colon \E_K \to \graph_K\) as follows: for each object \((A, B)\) in \(\E_K\), we set \(\mathcal{F}(A, B) = (E, w)\), where $E$ is the graph
associated to $A$ relative to $B$ (as a set $E^0=B$). For any edge $a$ from $u$ to $v$ in this graph, we define $w(a)$ to be the scalar such that $u^2=w(a)v+R$, (with $Rv=0$). Furthermore, 
for each morphism \(t\), we define \(\mathcal{F}(t) = \theta_\vec\), where \(\theta_\vec\) is the homomorphism established in Lemma \ref{previous}.

\begin{lemma}\label{erehiuqa}
Let $\theta_\vec \colon (E,w)\to (E',w')$ be a homomorphism in $\graph_K$. Then there is a unique
homomorphism $t\colon (A,B)\to (A',B')$ in $\E_K$ such that $\mathcal{F}(t)=\theta_\vec$. 
\end{lemma}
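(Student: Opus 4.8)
The statement asserts that the functor $\mathcal{F}\colon\E_K\to\graph_K$ is fully faithful in the relevant sense: every morphism $\theta_\vec$ in $\graph_K$ between the images $\mathcal{F}(A,B)$ and $\mathcal{F}(A',B')$ comes from a unique morphism $t$ in $\E_K$. So the plan is to \emph{construct} $t$ from the data $(\theta_\vec,\vec)$, \emph{verify} it is a morphism in $\E_K$, \emph{check} that $\mathcal{F}(t)=\theta_\vec$, and finally \emph{prove uniqueness}.

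\textbf{Construction.} Write $B=\{u_i\}_{i\in\Lambda}$ and $B'=\{v_j\}_{j\in\Lambda'}$, and let $\s\colon\Lambda\to\Lambda'$ be the bijection on vertices underlying the graph isomorphism $\theta$ (i.e.\ $\theta(u_i)=v_{\s(i)}$), and $\vec=(x_i)_{i\in\Lambda}\in(K^\times)^\Lambda$ with $((x_i),(x_i^{-1}))\in V(I_\theta)$. Define the linear map $t\colon A\to A'$ on the basis by $t(u_i)=x_i v_{\s(i)}$ and extend linearly. Since the $x_i$ are nonzero and $\s$ is a bijection, $t$ is a linear isomorphism, and by construction $t(u_i)\in K v_{\s(i)}$, so the basis-compatibility condition required of a morphism in $\E_K$ holds automatically once we know $t$ is an algebra homomorphism.

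\textbf{Verifying $t$ is an algebra homomorphism.} Because $B$ is a natural basis, it suffices to check $t(u_iu_j)=t(u_i)t(u_j)$ for all $i,j$. For $i\ne j$ both sides are $0$ (the left because $u_iu_j=0$, the right because $v_{\s(i)}v_{\s(j)}=0$ as $\s$ is injective). For $i=j$, write $u_i^2=\sum_k\w_{ki}u_k$; using Condition~(Sing), each nonzero $\w_{ki}$ corresponds to the unique edge $a\in s^{-1}(u_i)\cap r^{-1}(u_k)$ with $w(a)=\w_{ki}$, and similarly for $A'$. Then
\[
t(u_i^2)=\sum_k \w_{ki}x_k v_{\s(k)},\qquad
t(u_i)^2=x_i^2 v_{\s(i)}^2=x_i^2\sum_k \w'_{\s(k)\s(i)}v_{\s(k)},
\]
where in the last step the sum over the image vertices is reindexed via $\s$ using that $\theta$ is a graph isomorphism (it matches $s^{-1}(u_i)$ bijectively with $s^{-1}(v_{\s(i)})$). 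The two expressions agree coefficientwise precisely when $\w_{ki}x_k=x_i^2\w'_{\s(k)\s(i)}$ for every $k$, which is the relation $w(a)x_k=w'(\theta(a))x_i^2$ (for the relevant edge $a$) together with the fact that both sides vanish when there is no edge. This is exactly the membership $((x_i),(x_i^{-1}))\in V(I_\theta)$ built into the definition of a morphism in $\graph_K$. Hence $t$ is an algebra isomorphism, and therefore a morphism $(A,B)\to(A',B')$ in $\E_K$.

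\textbf{Compatibility and uniqueness.} That $\mathcal{F}(t)=\theta_\vec$ is immediate: by Lemma~\ref{previous} the homomorphism $\mathcal{F}(t)$ is the one sending $u_i\mapsto v_{\s(i)}$ with associated tuple $(x_i)$, which is $\theta_\vec$ itself. For uniqueness, suppose $t'\colon(A,B)\to(A',B')$ in $\E_K$ also satisfies $\mathcal{F}(t')=\theta_\vec$. By definition of $\mathcal{F}$ on morphisms (Lemma~\ref{previous}), writing $t'(u_i)=x'_i v_{\s'(i)}$, the induced data is the vertex map $\s'$ and the tuple $(x'_i)$; equality $\mathcal{F}(t')=\theta_\vec$ forces $\s'=\s$ and $x'_i=x_i$ for all $i$, hence $t'=t$ on the basis $B$ and thus $t'=t$. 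The main point to be careful about — the only place anything could go wrong — is the reindexing of the structure constants of $A'$ under $\s$ in the homomorphism check; this is where Condition~(Sing) and the fact that $\theta$ is a genuine graph isomorphism (not merely a vertex bijection) are essential, since they guarantee that the correspondence of edges, and hence of nonzero structure constants, is bijective and compatible with source and range.
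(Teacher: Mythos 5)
Your proposal is correct and follows essentially the same route as the paper: define $t(u_i)=x_iv_{\sigma(i)}$, use the relations $w(a)x_k=w'(\theta(a))x_i^2$ encoded in $V(I_\theta)$ to verify $t(u_i)^2=t(u_i^2)$, and conclude. The only difference is presentational — you check the homomorphism property coefficientwise against the structure constants while the paper sums over edges, and you spell out the $i\neq j$ case, the verification $\mathcal{F}(t)=\theta_\vec$, and the uniqueness argument, which the paper leaves as "easy."
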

\begin{proof}

Let \(E^0 = \{u_i\}_{i \in \Lambda}\) and \(E'^0 = \{v_i\}_{i \in \Lambda'}\). Then \(\theta_\vec(u_i) = v_{\sigma(i)}\) for some bijection \(\sigma \colon \Lambda \to \Lambda'\). Let $\vec = (x_i)$, it follows that  
\(
w(f)x_j = w'(\theta_{\vec}(f))x_i^2
\)  
for all \(f \in E^1 \cap s^{-1}(u_i) \cap r^{-1}(u_j)\). This enables us to define \(t \colon A \to A'\) as the linear extension of \(t(u_i) = x_i v_{\sigma(i)}\).
Moreover,
\begin{align*}
    t(u_i)^2 &= x_i^2 v_{\s(i)}^2 = x_i^2 \sum_{s(g)=v_{\s(i)}} w'(g) \, r(g)
    = x_i^2 \sum_{s(f)=u_i} w'(\theta_\vec(f)) \, r(\theta_\vec(f)) \notag \\
    &= \sum_{s(f)=u_i} x_i^2 w'(\theta_\vec(f)) \, \theta_\vec(r(f))
    = \sum_{s(f)=u_i} w(f) \, x_j \, \theta_\vec(r(f)).
\end{align*}

Since $s(f)=u_i$ and $r(f)=u_j$, we have that  $\theta_\vec(r(f))=\theta_\vec(u_j)=v_{\s(j)}=x_j^{-1}t(u_j)$ and so 
\begin{align*}
 t(u_i)^2  & =\sum_{s(f)=u_i}w(f)x_jx_j^{-1}t(u_j)= 
\sum_{s(f)=u_i}w(f)t(r(f)) \\   & = t\left(\sum_{s(f)=u_i}w(f)r(f)\right)=t(u_i^2).
\end{align*}

The uniqueness of $t$ follows easily.
\end{proof}

 Let  $\mathcal{G}\colon\graph_K\to\E_K$ be the functor such that $\mathcal{G}(E,w)=(A,B)$, 
where $A$ is the $K$-algebra with natural basis $B=E^0=\{u_i\}_{i \in \Lambda}$ and product given by
$u^2=\sum_{f\in s^{-1}(u)} w(f)r(f)$, where $f\in s^{-1}(u)$.
Furthermore, for $\theta_\vec\colon (E,w)\to (E',w')$ we define 
$\mathcal{G}(\theta_\vec)$ as the homomorphism $t\colon (A,B)\to (A',B')$ such that $t(u_i)=x_i v_{\s(i)}$, where $B'=\{v_i\}_{i \in \Lambda'}$ is the natural basis of $A'$, $(x_i)_{i\in\Lambda}\in V(I_{\theta_\vec})$, and $\s\colon\Lambda\to\Lambda'$ is the map satisfying 
$\theta_\vec(u_i)=v_{\s(i)}$.

\begin{proposition}\label{weenoh}
  The categories $\E_K$ and $\graph_K$ are isomorphic in the sense
that $\mathcal{F}\mathcal{G}=1_{\graph_K}$ and $\mathcal{G}\mathcal{F}=1_{\E_K}$. 
In particular, the functor $\mathcal{F}$ induces a group isomorphism $\aut_{\E_K}(A,B)\cong\aut_{\graph_K}(E,w)$.
\end{proposition}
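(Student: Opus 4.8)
The plan is to show that $\mathcal{F}$ and $\mathcal{G}$ are mutually inverse functors and then read off the statement about automorphism groups formally; most of the content is already contained in Lemmas~\ref{previous} and~\ref{erehiuqa}. First I would record that, for fixed $(A,B),(A',B')\in\E_K$ with associated weighted graphs $(E,w)=\mathcal{F}(A,B)$ and $(E',w')=\mathcal{F}(A',B')$, the assignment $t\mapsto\mathcal{F}(t)=\theta_\vec$ is a bijection
$$\hom_{\E_K}\bigl((A,B),(A',B')\bigr)\xrightarrow{\ \sim\ }\hom_{\graph_K}\bigl((E,w),(E',w')\bigr).$$
It is well defined by Lemma~\ref{previous} (both the graph isomorphism $\theta$ and the vector $\vec=(x_i)$ are forced by the formula $t(u_i)=x_iv_{\s(i)}$, so $\theta_\vec$ is uniquely determined by $t$); it is bijective by Lemma~\ref{erehiuqa}, which asserts that every $\theta_\vec$ comes from a \emph{unique} $t$, namely $t=\mathcal{G}(\theta_\vec)$. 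Hence on morphisms $\mathcal{F}\mathcal{G}=\mathrm{id}$ and $\mathcal{G}\mathcal{F}=\mathrm{id}$.

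Next I would check the two composites on objects. Given $(A,B)$, the weighted graph $\mathcal{F}(A,B)=(E,w)$ records exactly the structure constants $\omega_{ji}$ of $A$ relative to $B$ (an edge $u_i\to u_j$ of weight $\omega_{ji}$ for each nonzero $\omega_{ji}$), and $\mathcal{G}$ rebuilds from this the same multiplication on the same basis, so $\mathcal{G}\mathcal{F}(A,B)=(A,B)$. Conversely, for a weighted graph $(E,w)$, the algebra $\mathcal{G}(E,w)$ has basis $E^0$ with $u^2=\sum_{f\in s^{-1}(u)}w(f)r(f)$; since Condition~(Sing) holds (as in Remark~\ref{laud}) there is at most one edge $u\to v$, so applying $\mathcal{F}$ recovers precisely the edge $u\to v$ of weight $w(f)$ whenever one was present, giving $\mathcal{F}\mathcal{G}(E,w)=(E,w)$. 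This is exactly the biunivocal correspondence of Remark~\ref{laud}, now upgraded to morphisms.

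The only point not already packaged in the lemmas is functoriality of $\mathcal{F}$ and $\mathcal{G}$, and I expect this routine bookkeeping to be the most laborious step. Identities are handled by the remark following the definition of $\graph_K$: $1_{(A,B)}$ has $t(u_i)=u_i$, hence $\vec=(1)_{i\in\Lambda}$ and $\theta=\mathrm{id}_E$, so $\mathcal{F}(1_{(A,B)})=1_{(E,w)}$, and symmetrically for $\mathcal{G}$. For composition, given $t\colon(A,B)\to(A',B')$ with $t(u_i)=x_iv_{\s(i)}$ and $t'\colon(A',B')\to(A'',B'')$ with $t'(v_j)=y_jw_{\t(j)}$, one computes $t't(u_i)=x_i\,y_{\s(i)}\,w_{\t\s(i)}$, so $\mathcal{F}(t't)$ has underlying graph isomorphism $\t\s$ and vector $(x_i y_{\s(i)})_{i\in\Lambda}$, which is by definition the composite $\mathcal{F}(t')\circ\mathcal{F}(t)$ in $\graph_K$; functoriality of $\mathcal{G}$ is the same identity read in reverse, using the uniqueness clause of Lemma~\ref{erehiuqa}. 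Combined with the two previous paragraphs, this yields $\mathcal{F}\mathcal{G}=1_{\graph_K}$ and $\mathcal{G}\mathcal{F}=1_{\E_K}$.

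Finally, an isomorphism of categories restricts, for each object $X$, to a bijection $\aut(X)\to\aut(\mathcal{F}X)$ which is a group homomorphism precisely because $\mathcal{F}$ preserves composition and identities. Applying this to $X=(A,B)$, whose image under $\mathcal{F}$ is $(E,w)$, gives the asserted group isomorphism $\aut_{\E_K}(A,B)\cong\aut_{\graph_K}(E,w)$.
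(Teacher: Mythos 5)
Your proposal is correct and follows the same route as the paper: establish that $\mathcal{F}$ and $\mathcal{G}$ are mutually inverse functors (which the paper leaves as ``easy to check'' and you spell out via Lemmas~\ref{previous} and~\ref{erehiuqa} and Remark~\ref{laud}) and then deduce the automorphism-group isomorphism as a formal consequence of having an isomorphism of categories. The only difference is that you supply the object-level and functoriality verifications explicitly, which the paper omits.
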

\begin{proof}
It is easy to check that $\mathcal{F}$ and $\mathcal{G}$ are mutually inverse functors. Now,
    if $\mathbf{C}_i$ are categories ($i=1,2$) and there are functors $\mathcal{A}\colon\mathbf{C_1}\to\mathbf{C_2}$ and $\mathcal{B}\colon\mathbf{C_2}\to\mathbf{C_1}$ such that $\mathcal{A}\mathcal{B}=1_{\mathbf{C_2}}$ and $\mathcal{B}\mathcal{A}=1_{\mathbf{C_1}}$, then $\mathcal{A}$ 
    maps monomorphicaly automorphisms of any object $U\in\mathbf{C_1}$ onto automorphisms
    of $\mathcal{A}(U)$ in $\mathbf{C_2}$. So it induces a group isomorphism $\aut_{\mathbf{C_1}}(U)\cong \aut_{\mathbf{C_2}}(\mathcal{A}(U))$. Applying this reasoning to the functors $\mathcal{F}\colon \E_K\to\graph_K$ and
    $\mathcal{G}\colon \graph_K\to \E_K$, we obtain the desired result.
    \end{proof}
    
\begin{example}\label{ex_aut_graphk}
Let $K$ be a field with $\car(K)\ne 2$ and consider the $2$-dimensional evolution algebra $A$ with natural basis $B=\{u_1,u_2\}$ and multiplication  $u_1^2=u_1+u_2$, $u_2^2=2u_1+u_2$. The associated weighted graph $(E,w)$ is

\begin{equation*}
    E: \begin{tikzcd}
\bullet  u_1 \arrow[r, bend left,"f"] \arrow[loop, distance=2em, in=215, out=145, "h"'] & \bullet u_2 \arrow[l, bend left,"g"] \arrow[loop, distance=2em, in=35, out=325,"k"']
\end{tikzcd}
\end{equation*}
where $w(h)=w(k)=1$, $w(f)=1$, and $w(g)=2$. We define the order two automorphism $\theta$ of $E$  given by

\begin{equation}\label{churros}
    \begin{array}{ll}
      \theta(u_1)=u_2, & \theta(h)=k,\\ 
       \theta(f)=g. &
    \end{array}
\end{equation}

Since $\aut_{\E_K}(A,B)\cong\aut_{\graph_K}(E,w)$ and $\aut_{\graph}(E)=\{1_E,\theta\}$. To compute $\aut_{\graph_K}(E,w)$, we only need
to verify if $\theta$ is a homomorphism in $\graph_K$. We compute $I_\theta$: it is the ideal of $K[x_1,x_2,y_1,y_2]$ generated by $x_iy_i-1$ ($i=1,2$), $w(h)x_1-w(k)x_1^2$,
$w(k)x_2-w(h)x_2^2$, $w(f)x_2-w(g)x_1^2$, and $w(g)x_1-w(f)x_2^2$. So,
$$I_\theta=(x_1y_1-1,x_2y_2-1, x_1-x_1^2,x_2-x_2^2,x_2-2x_1^2,2x_1-x_2^2).$$ It is easy to check
that $1\in I_\theta$, so $V(I_\theta)=\emptyset$ and consequently $\theta$ is not a homomorphism in the category $\graph_K$. This implies that \(\aut_{\graph_K}(E, w) = \{1\}\), and consequently, \(\aut(A, B) = \{1\}\). From this, given that $A$ is a perfect evolution algebra, the reader can also deduce that \(\aut_K(A) = \{1\}\), where automorphisms are considered in the category of \(K\)-algebras.

\end{example}

\begin{example}  
Consider again the graph \(E\) described in Example \ref{ex_aut_graphk}, now with a general weight \(w\). Let \(\theta\) be the automorphism in the category \(\graph\) defined by \eqref{churros}. We aim to determine the conditions under which \(\theta\) is a homomorphism in the category \(\graph_K\). To verify whether 
$\theta\in\aut_{\graph_K}(E,w)$, we must check if the following system has solutions $x_i\ne 0$, ($i=1,2$):
$$w(h)x_1=w(k)x_1^2,\ w(k)x_2=w(h)x_2^2,\ w(f)x_1=w(g)x_2^2,\ w(g)x_2=w(f)x_1^2.$$
It is not easy to find all the solutions of the above system, but it is straightforward to verify that if the field contains a cubic root \(\rho\) of \(1\), then a solution of the above system is 
 $x_1 = \rho, \, x_2 = \rho^2 \iff \big(w(f) = w(g), \, w(h) = \rho w(k)\big)  $. Thus, we have 
$$\begin{cases} \text{ if }w(h) = \rho w(k) \text{ and } w(f) = w(g), \text{ then } \aut_{\graph_K}(E, w) \cong \mathbb{Z}_2,\\ \text{ otherwise, } \aut_{\graph_K}(E, w) = \{1\}.\end{cases}$$

\end{example}

\subsection{On a class of automorphisms of evolution algebras}
The group isomorphism $\aut_{\graph_K
}(E,w)\cong \aut_{\E_K}(A,B)$ given in Proposition~\ref{weenoh}, followed by the group monomorphism $\aut_{\E_K}(A,B)\hookrightarrow\aut(A)$, provides the group monomorphism
$\aut_{\graph_K
}(E,w)\to\aut(A)$, which we denote by $\theta_{\vec}\mapsto\overline{\theta_{\vec}}$.

More generally, we have the following definition.

\begin{definition}\label{funtor}
    Let $\s\in\aut_{\graph_K}(E,w)$, and define the functor $\F_\s\colon\I_E\to\set$ as follows. For any $i\in E^0$, let $\F_\s(i)=K^\times$ and, for $a\in E^1$ with $s(a)=i$ and $r(a)=j$, define
$\F_\s(a)\colon K^\times\to K^\times$ by $x\mapsto k_{ji}x^2$, where $k_{ji}=\omega_{\s(j)\s(i)}/\omega_{ji}$. If $\l=a_1\cdots a_n$ is a path of length $n>1$, set
$\F_\s(\l)=\F_\s(a_1)\circ\cdots\circ \F_\s(a_n)$. Complete this definition by setting
$\F_\s(1_i)=1_{K^\times}$ for any $i$. 
\end{definition}

Following Remark~\ref{remark_lolitienesueño}, the inverse limit $\displaystyle 
\lim_{\leftarrow}\F_\s$ is the set of all $(x_i)_{i\in E^0}$, elements of $(K^\times)^{E^0}$, such that 
when $a$ is an edge in $E^1$ with source $i$ and range $j$ we have $\F_\s(a)(x_i)=x_j$. Equivalently,
$k_{ji}x_i^2=x_j$, which gives 
\begin{equation}\label{ident}
    \omega_{\s(j)\s(i)}x_i^2=x_j\omega_{ji}.
\end{equation} 
These conditions say that the map $A\to A$ such that $e_i\mapsto x_i e_{\s(i)}$ is an automorphism of $A$. 
Since the converse is clear, we have here a collection of injections 
\begin{equation}\label{Leo}   
\lim_{\leftarrow}\F_\s \hookrightarrow \aut(A),\quad (\s\in\aut\nolimits_{\graph_K}(E,w)).\end{equation}
Moreover, we define the operation $ 
    \bullet \colon\lim\limits_{\leftarrow}\F_\s \times \lim\limits_{\leftarrow}\F_\tau\to\lim\limits_{\leftarrow}\F_{\tau\s}$ 
    by  $$((x_i)_{i\in E^0},(y_i)_{i\in E^0})\mapsto  (x_i)\bullet(y_i):=(x_iy_{\sigma(i)})_{i\in E^0}.$$
    Observe that this operation is well-defined. Indeed, since $(x_i)_{i\in E^0}\in \L{\F_\s}$ and $(y_i)_{i\in E^0}\in \L{\F_\tau}$, this implies that 
    \begin{equation*}
\omega_{\tau\s(j)\tau\s(i)}x_i^2y_{\s(i)}^2=\omega_{\s(j)\s(i)}x_i^2y_{\s(j)}=\omega_{ji}x_jy_{\s(j)}
    \end{equation*} and so we have that $(x_iy_{\sigma(i)})_{i\in E^0} \in  \lim\limits_{\leftarrow}\F_{\tau\s}$. 
\begin{remark}\label{identificacion}
    When we view $\L{\F_\s}$ inside $\aut(A)$, 
    we can say that $\L{\F_\s}$ is the set of all automorphisms $\phi_\vec \colon A\to A$ such 
that $\phi_{\vec}(e_i)=x_i e_{\s(i)}$ for any $i$, where $\vec=(x_i)_{i\in E^0}$. Note that the conditions $\phi(e_i)=x_ie_{\s(i)}$ for all $i$ imply the identities \eqref{ident}. Furthermore, modulo the identification in \eqref{Leo}, the operation $\bullet$ corresponds to composition.
\end{remark} 

As a consequence, we have the following properties. 

\begin{proposition}  \label{umrof}
  Let $\s, \tau\in\aut_{\graph_K}(E,w)$. Then,
\noindent
 \begin{enumerate}[\rm (i)]
  
   \item $\L\F_1=\diag(A;B),$
   \item $(\L{\F_{\s}})^{-1}=\L{\F_{\s^{-1}}}$,
   \item $(\lim\limits_{\leftarrow}\F_\s)\circ(\lim\limits_{\leftarrow}\F_\tau)= \lim\limits_{\leftarrow}\F_{\s\tau}$,
\item $(\L{\F_\s})\circ(\L{\F_{\s^{-1}}})=\diag(A;B),$

\item  $\L{\F_\s}\cap\L{\F_\t}=\emptyset,\ (\s\ne\t),$
\item $ \diag(A;B) \circ \phi_\vec=\L{\F_\s}, \ \forall \phi_\vec\in\L{\F_\s}. $ 
\end{enumerate}
\end{proposition}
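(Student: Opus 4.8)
## Proof Plan

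The plan is to verify each of the six items by unwinding the identification in Remark~\ref{identificacion}, which lets us work entirely with automorphisms of the shape $\phi_{\vec}(e_i)=x_i e_{\s(i)}$ and with the operation $\bullet$, which corresponds to composition. Throughout I would write elements of $\L{\F_\s}$ as $\phi_{\vec}$ with $\vec=(x_i)_{i\in E^0}$ and use repeatedly that $\phi_{\vec}\circ\phi_{\wec}=\phi_{\vec\bullet\wec}$, where $(\vec\bullet\wec)_i = x_i y_{\s(i)}$, together with the well-definedness of $\bullet$ already established before the proposition.

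First I would dispatch (i): by Definition~\ref{funtor}, when $\s=1$ the constants $k_{ji}=\omega_{ji}/\omega_{ji}=1$, so $\F_1=\CC$ of Definition~\ref{notorius}, and $\L{\F_1}=\L{\CC}=\diag(A;B)$ by Theorem~\ref{arop}; equivalently, the automorphisms $\phi_{\vec}$ with $\s=1$ are exactly the diagonalizable ones. For (ii) I would observe that if $\phi_{\vec}\in\L{\F_\s}$, then $\phi_{\vec}^{-1}$ sends $e_{\s(i)}\mapsto x_i^{-1}e_i$, i.e. $\phi_{\vec}^{-1}(e_j)=x_{\s^{-1}(j)}^{-1}e_{\s^{-1}(j)}$, which is of the form $\phi_{\wec}$ with $\s$ replaced by $\s^{-1}$; one then checks $\wec=(x_{\s^{-1}(j)}^{-1})_j$ satisfies the defining identities \eqref{ident} for $\F_{\s^{-1}}$, so $\phi_{\vec}^{-1}\in\L{\F_{\s^{-1}}}$, and conversely. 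Item (iii) is then immediate from the well-definedness of $\bullet$ and Remark~\ref{identificacion}: composing $\phi_{\vec}\in\L{\F_\s}$ with $\phi_{\wec}\in\L{\F_\tau}$ gives $\phi_{\vec\bullet\wec}\in\L{\F_{\s\tau}}$ (noting the order of composition agreed with the operation $\bullet\colon\L{\F_\s}\times\L{\F_\tau}\to\L{\F_{\tau\s}}$ — I would be careful here to state the composition convention consistently, since $\circ$ and $\bullet$ compose indices in opposite orders, and this is the one genuinely delicate bookkeeping point), and surjectivity onto $\L{\F_{\s\tau}}$ follows by factoring a given $\phi_{\zeta}\in\L{\F_{\s\tau}}$ as $\phi_{\vec}\circ\phi_{\wec}$ with a suitable choice, e.g. $\wec=\zeta$ with indices relabelled and $\vec$ all ones, using (i).

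Item (iv) is the special case $\tau=\s^{-1}$ of (iii) combined with (i): $(\L{\F_\s})\circ(\L{\F_{\s^{-1}}})=\L{\F_{\s\s^{-1}}}=\L{\F_1}=\diag(A;B)$. For (v), if $\phi\in\L{\F_\s}\cap\L{\F_\t}$ then $\phi(e_i)\in K^\times e_{\s(i)}\cap K^\times e_{\t(i)}$ for every $i$, forcing $\s(i)=\t(i)$ for all $i$, hence $\s=\t$; so distinct $\s,\t$ give disjoint sets. Finally (vi): fix $\phi_{\vec}\in\L{\F_\s}$. For any $d\in\diag(A;B)=\L{\F_1}$ we have $d\circ\phi_{\vec}\in\L{\F_{1\cdot\s}}=\L{\F_\s}$ by (iii), giving the inclusion $\diag(A;B)\circ\phi_{\vec}\subseteq\L{\F_\s}$; conversely, given $\psi\in\L{\F_\s}$, the composite $\psi\circ\phi_{\vec}^{-1}$ lies in $\L{\F_\s}\circ\L{\F_{\s^{-1}}}=\diag(A;B)$ by (ii) and (iv), so $\psi=(\psi\circ\phi_{\vec}^{-1})\circ\phi_{\vec}\in\diag(A;B)\circ\phi_{\vec}$, proving equality.

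The only real obstacle is the composition-order bookkeeping: the operation $\bullet$ was defined as a map $\L{\F_\s}\times\L{\F_\tau}\to\L{\F_{\tau\s}}$, whereas ordinary composition $\phi_{\vec}\circ\phi_{\wec}$ naturally lands in $\L{\F_{\s\tau}}$ when $\phi_{\vec}\in\L{\F_\s}$ and $\phi_{\wec}\in\L{\F_\tau}$. I would resolve this once and for all at the start of the proof by spelling out exactly which composite corresponds to $(x_i)\bullet(y_i)=(x_i y_{\s(i)})$, and then items (iii)--(iv) and (vi) follow formally; everything else is a short direct verification.
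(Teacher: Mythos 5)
Your proof follows essentially the same route as the paper's: (i) and (v) by direct inspection, (ii) by computing the inverse explicitly, (iii) from the operation $\bullet$ together with (ii), and (iv), (vi) as formal consequences; your extra care about the order mismatch between $\circ$ and $\bullet$ is warranted, since the paper's Remark~\ref{identificacion} glosses over it. One small slip: in the surjectivity half of (iii), factoring $\phi_{\zeta}\in\L{\F_{\s\tau}}$ using ``$\vec$ all ones'' does not work in general, because the all-ones vector lies in $\L{\F_\s}$ only when $\s$ preserves the weights exactly (the identities \eqref{ident} force $\omega_{\s(j)\s(i)}=\omega_{ji}$); the correct factorization is $\phi_{\zeta}=\phi_{\vec}\circ(\phi_{\vec}^{-1}\circ\phi_{\zeta})$ for any fixed $\phi_{\vec}\in\L{\F_\s}$ (nonempty since $\s\in\aut_{\graph_K}(E,w)$), with $\phi_{\vec}^{-1}\circ\phi_{\zeta}\in\L{\F_{\s^{-1}}}\circ\L{\F_{\s\tau}}\subseteq\L{\F_{\tau}}$ by (ii) and the already-established inclusion -- which is exactly how the paper's appeal to ``(ii) and Remark~\ref{identificacion}'' should be read.
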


\begin{proof}
(i) is straightforward. For item (ii), let $\phi_\vec \in \L{\F_\s}$, so that $\phi_\vec(e_i)=x_ie_{\sigma(i)}$. Hence, $e_i=x_i \phi_\vec^{-1}(e_{\sigma(i)})$. This implies that $x_{\sigma^{-1}(j)}^{-1}e_{\sigma^{-1}(j)}=\phi_\vec^{-1}(e_j)$. Therefore, $\phi_\vec^{-1}\in \L{\F_{\sigma^{-1}}}$. Moreover, if $\phi_{\vec}\in \L{\F_{\s^{-1}}}$ then $\phi_{\vec}^{-1}\in (\L{\F_{{\sigma}^{-1}}})^{-1} \subset \L{\F_\s}$. Hence, $\phi_{\vec}\in (\L{\F_{\sigma}})^{-1}$. Item (iii) follows from item (ii) and Remark \ref{identificacion}.
Item (iv) is a consequence of items (i) and (iii).
Let us prove (v). If $\phi_\vec$ is an element in the intersection then $\phi_\vec(e_i)=x_i e_{\s(i)}=x_i e_{\tau(i)}$ for any $i$. If $\s(i)\ne\tau(i)$ then $e_{\s(i)}$ and $e_{\tau(i)}$ are linearly independent so
$x_i=0$, which is a contradiction.
Finally, to prove (vi), we first note that 
$\diag(A;B) \circ \phi_\vec=(\L{\F_1})\phi_\vec\subset \L{\F_1}\circ\L{\F_\s}\subset \L{\F_\s}$.
On the other hand, for any $\psi_\wec\in\L{\F_\s}$ we can write 
$\psi_\wec=(\psi_\wec \phi_\vec^{-1})\phi_\vec$ and 
$\psi_\wec \phi_\vec^{-1}\in\L{\F_\s}\circ\L{\F_{\s^{-1}}}=\diag(A;B).$ 
\end{proof}%
This suggests the following.

\begin{notation}\label{pescaito}  
Consider the union $\U:=\cup_\s \L\F_\s$ (where $\s$ ranges in the group $\aut_{\graph_K}(E,w)$). The formulas in \eqref{umrof} imply that $\U$ is a subgroup of $\aut(A)$ and that $\diag(A;B)$ is a normal subgroup of $\U$.
\end{notation}
As we will see in Corollary \ref{tesis}, for specific evolution algebras $A$, one has $\U=\aut(A)$. Notice that $\L\F_\s$ is not a group in general, but the union
of the various $\L\F_\s$ is. Now, we take into consideration  the map $p\colon\U\to\aut_{\graph_K}(E,w)$ defined as follows: for $t\in\U$, there is a permutation $\s$ of $\Lambda$ such that $t\in\L{\F_\s}$. So, there
is $(x_i)_{i\in\Lambda}\in K^\Lambda$ such that $t(u_i)=x_iu_{\s(i)}$. We define $\theta\in\aut_{\graph_K}(E,w)$ as in Lemma~\ref{previous}:       by writing $\theta(u_i)=u_{\s(i)}$ and, for an edge $a\in E^1\cap \s^{-1}(u_i)\cap r^{-1}(u_j)$, we define $\theta(a)$ as the unique edge in $E^1$ connecting $u_{\s(i)}$ to
$u_{\s(j)}$. It is straightforward to prove that $p$ is a group homomorphism whose kernel is $\diag(A;B)$. So, there is a short exact sequence of groups $$\diag(A;B)\hookrightarrow\U\buildrel{p}\over{\twoheadrightarrow} \aut\nolimits_{\graph_K}(E,w). $$
This sequence splits. Indeed, if we let $\iota\colon\aut\nolimits_{\graph_K}(E,w)\to\U$ be such that $\theta\mapsto t$, where $t$ is defined as $\mathcal{G}(\theta)$ (see Lemma \ref{erehiuqa}), then  $p\iota=1$ (the identity in $\aut\nolimits_{\graph_K}(E,w)$). Consequently, 
\begin{theorem}\label{ortauc}
With the notation in \ref{pescaito}, there is a group isomorphism $$\U\cong\diag(A;B)\rtimes\aut\nolimits_{\graph_K}(E,w).$$
\end{theorem}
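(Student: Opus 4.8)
The plan is to exhibit the isomorphism $\U\cong\diag(A;B)\rtimes\aut_{\graph_K}(E,w)$ as the splitting of the short exact sequence
\[
\diag(A;B)\hookrightarrow\U\buildrel{p}\over{\twoheadrightarrow}\aut\nolimits_{\graph_K}(E,w),
\]
so the work amounts to verifying that $p$ is a well-defined surjective group homomorphism with kernel $\diag(A;B)$, and that the map $\iota$ is a section. First I would check that $p$ is well-defined: if $t\in\U$, then by Proposition~\ref{umrof}(v) the sets $\L{\F_\s}$ are pairwise disjoint, so there is a \emph{unique} $\s$ with $t\in\L{\F_\s}$, and hence the associated graph automorphism $\theta=\theta(t)$ constructed as in Lemma~\ref{previous} (writing $\theta(u_i)=u_{\s(i)}$, and sending an edge $a\in E^1\cap s^{-1}(u_i)\cap r^{-1}(u_j)$ to the unique edge from $u_{\s(i)}$ to $u_{\s(j)}$, which exists and is unique because $E$ satisfies Condition~(Sing) and the identities \eqref{ident} hold) is unambiguously attached to $t$. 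One must also confirm that $\theta(t)$ genuinely lies in $\aut_{\graph_K}(E,w)$, not merely in $\aut_{\graph}(E)$: this is exactly the content of Lemma~\ref{previous}, since $((x_i),(x_i^{-1}))\in V(I_{\theta})$ follows from \eqref{ident}.

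Next I would verify that $p$ is a group homomorphism. Given $t\in\L{\F_\s}$ and $t'\in\L{\F_\tau}$ with $t(u_i)=x_iu_{\s(i)}$ and $t'(u_i)=y_iu_{\tau(i)}$, the composite $t'\circ t$ sends $u_i\mapsto x_i t'(u_{\s(i)})=x_iy_{\s(i)}u_{\tau\s(i)}$, so by Remark~\ref{identificacion} (the operation $\bullet$ corresponds to composition) we have $t'\circ t\in\L{\F_{\tau\s}}$, and the permutation attached to $t'\circ t$ is $\tau\s$. Tracking the edge assignments, the graph automorphism attached to $t'\circ t$ is the composite of those attached to $t'$ and $t$; hence $p(t'\circ t)=p(t')p(t)$. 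Then the kernel of $p$ consists of those $t$ whose attached permutation is the identity, i.e.\ $t\in\L{\F_1}=\diag(A;B)$ by Proposition~\ref{umrof}(i); so $\ker p=\diag(A;B)$, which re-confirms normality already recorded in Notation~\ref{pescaito}. Surjectivity of $p$ is immediate from the existence of the section below.

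Finally I would check that $\iota\colon\aut_{\graph_K}(E,w)\to\U$, $\theta\mapsto\mathcal{G}(\theta)$, is a group homomorphism and a right inverse of $p$. That $\iota$ is a homomorphism follows because $\mathcal{G}$ is a functor (Proposition~\ref{weenoh}) and restricts to a group homomorphism on automorphism groups; one just notes $\mathcal{G}(\theta)\in\L{\F_\s}\subseteq\U$ where $\s$ is the vertex permutation of $\theta$. For $p\iota=\mathrm{id}$: starting from $\theta_\vec\in\aut_{\graph_K}(E,w)$ with vertex map $\s$ and weights $\vec=(x_i)$, the automorphism $t=\mathcal{G}(\theta_\vec)$ sends $u_i\mapsto x_iu_{\s(i)}$, and by construction $p(t)$ has vertex map $\s$ and sends an edge from $u_i$ to $u_j$ to the unique edge from $u_{\s(i)}$ to $u_{\s(j)}$—which is precisely $\theta_\vec$ on edges by Condition~(Sing). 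Hence $p\iota=1$, the sequence splits, and $\U\cong\diag(A;B)\rtimes\aut_{\graph_K}(E,w)$ by the standard characterization of split extensions.

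The main obstacle, modest but real, is the bookkeeping in showing that the edge-level assignment of $p(t'\circ t)$ equals the composite of the edge-level assignments of $p(t')$ and $p(t)$: one must invoke Condition~(Sing) repeatedly to guarantee that "the unique edge between two prescribed vertices" is a genuine, well-defined operation, and then check that these unique edges compose correctly under $\tau\s$. Everything else—that $p$ and $\iota$ land in the right groups, that they are homomorphisms, that $p\iota=1$—reduces to the functoriality of $\mathcal{G}$ and the already-established Proposition~\ref{umrof} and Lemma~\ref{previous}, so no new ideas are needed beyond careful use of those.
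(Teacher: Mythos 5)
Your proposal is correct and follows essentially the same route as the paper: the authors also define $p\colon\U\to\aut_{\graph_K}(E,w)$ via the unique $\s$ with $t\in\L{\F_\s}$, assert it is a homomorphism with kernel $\diag(A;B)$, and split the resulting short exact sequence using the section $\theta\mapsto\mathcal{G}(\theta)$. You merely supply the details (well-definedness via Proposition~\ref{umrof}(v), the edge-level bookkeeping under Condition~(Sing)) that the paper dismisses as straightforward.
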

\begin{remark} 
    Note that $\aut_{\E_K}(A,B)$ consists of the elements $t\in\aut(A)$ such that for any $b\in B$, there is some $b'\in B$ and $k\in K$ with $t(b)=kb'$. In view of Theorem \ref{ortauc},
   $\aut_{\E_K}(A,B)$ agrees with the 8group $\U$ defined in Notation \ref{pescaito}.
\end{remark}

\begin{corollary}\label{tesis}
  Let $A$ be an evolution algebra. If $A$ has  a natural basis $B$ satisfying the condition 2LI,  then $$\aut(A)\overset{(1)}{\cong}\diag(A;B)\rtimes\aut\nolimits_{\graph_K}(E,w)\overset{(2)}{\cong}\diag(A;B)\rtimes\aut{_{\E_K}}(A,B).$$

\noindent
In particular, this happens if the structure matrix $M_B(A)$ is invertible.
\end{corollary}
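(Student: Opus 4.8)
The plan is to derive the corollary from two results already at hand: Theorem~\ref{ortauc}, which gives $\U\cong\diag(A;B)\rtimes\aut_{\graph_K}(E,w)$ for the subgroup $\U=\bigcup_{\s}\L{\F_\s}$ of $\aut(A)$ introduced in Notation~\ref{pescaito}, and the group isomorphism $\aut_{\graph_K}(E,w)\cong\aut_{\E_K}(A,B)$ of Proposition~\ref{weenoh}. The only new ingredient is to show that, under the 2LI hypothesis, every automorphism of $A$ already lies in $\U$, that is, $\aut(A)=\U$; granting this, isomorphism~(1) is Theorem~\ref{ortauc} verbatim and isomorphism~(2) follows by replacing the second factor.

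To prove $\aut(A)=\U$: the inclusion $\U\subseteq\aut(A)$ is built in through the injections $\L{\F_\s}\hookrightarrow\aut(A)$ of \eqref{Leo}. Conversely, fix $f\in\aut(A)$. Since $f(e_i)f(e_j)=f(e_ie_j)=0$ for $i\neq j$ and $f$ is a linear bijection, $f(B)=\{f(e_i)\}_{i\in\Lambda}$ is again a natural basis of $A$. As $B$ satisfies 2LI, Theorem~\ref{2LI} forces $f(B)$ to agree with $B$ up to permutation and scalar multiplication, so there are $\sigma\in S_\Lambda$ and $x_i\in K^\times$ with $f(e_i)=x_ie_{\sigma(i)}$ for every $i$. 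In particular $f(b)\in Kb'$ for each $b\in B$, so $f\in\aut_{\E_K}(A,B)$; feeding $f$ through $\mathcal{F}$ (equivalently, Lemma~\ref{previous}) yields $\theta_\vec\in\aut_{\graph_K}(E,w)$ with underlying permutation $\sigma$, and by the identification in Remark~\ref{identificacion} this says exactly that $f\in\L{\F_\sigma}\subseteq\U$. Hence $\aut(A)=\U$ (and $\U=\aut_{\E_K}(A,B)$, as already noted after Theorem~\ref{ortauc}).

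Isomorphism~(1) is then $\aut(A)=\U\cong\diag(A;B)\rtimes\aut_{\graph_K}(E,w)$ by Theorem~\ref{ortauc}. For isomorphism~(2) I would invoke the elementary fact that in a semidirect product $N\rtimes_\varphi Q$ one may replace $Q$ by an isomorphic group $Q'$ via $\psi\colon Q\to Q'$, obtaining $N\rtimes_\varphi Q\cong N\rtimes_{\varphi\circ\psi^{-1}}Q'$; applied with $N=\diag(A;B)$, $Q=\aut_{\graph_K}(E,w)$, $Q'=\aut_{\E_K}(A,B)$ and $\psi$ the isomorphism of Proposition~\ref{weenoh}, this gives the second isomorphism. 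Checking that the induced actions match is routine, since $\mathcal{F}$ and $\mathcal{G}$ are mutually inverse functors and hence transport conjugation and the splitting $\iota$ faithfully.

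Finally, for the ``in particular'' clause: if $M_B$ is invertible then, exactly as in the proof of Proposition~\ref{qnu}, $M_B\lambda=0$ implies $\lambda=0$, so $\{e_k^2\}_{k\in\Lambda}$ is linearly independent and, since no column of $M_B$ vanishes, each $e_k^2\neq0$. Therefore every pair $\{x^2,y^2\}$ with $x\neq y$ in $B$ is a two-element subset of a linearly independent set, so $B$ satisfies 2LI and the corollary applies (alternatively, Corollary~\ref{dostroc} gives $\aut(A)=\U$ directly). The step demanding the most care is not a computation but the bookkeeping: keeping the several avatars of ``the same group'' --- $\U$, $\aut_{\E_K}(A,B)$, $\aut(A)$ --- consistently identified and verifying compatibility of the semidirect-product decompositions through Proposition~\ref{weenoh}. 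All the genuine content sits in Theorem~\ref{2LI}, which is what collapses an arbitrary automorphism to the permute-and-scale form.
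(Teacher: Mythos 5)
Your proposal is correct and follows essentially the same route as the paper, whose proof is a one-line citation of Theorem~\ref{2LI} (to force every automorphism into $\U$, giving isomorphism~(1) via Theorem~\ref{ortauc}) and Proposition~\ref{weenoh} (for isomorphism~(2)); you have simply made explicit the steps the paper leaves implicit, including the observation that an invertible $M_B$ yields linear independence of $\{e_k^2\}_{k\in\Lambda}$ and hence the 2LI condition.
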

\begin{proof}
The isomorphism (1) is a consequence of Theorem \ref{2LI} and the isomorphism (2) of Proposition \ref{weenoh}.
\end{proof}

\subsection{Methods for inverse limit computations}

To illustrate how we can compute the inverse limit of $\F_\s$,  we will now consider several examples. The general idea is to consider, in the category of sets, the diagram obtained from the graph $E$ of the evolution $K$-algebra (relative to some natural basis $\{u_i\}$ such that 
$u_i^2=\sum_{j}\omega_{ji}u_j$) and replace each vertex by the set $K^\times$. Then, for every $a\in E^1$, we define $\F_\s(a)\colon K^\times\to K^\times$ by $\F_\s(a)(x)=k_{ji}x^2$, that is, $\F_\s(a)=k_{ji}s$, where $s$ is the squaring map and $k_{ji}=\omega_{\s(j)\s(i)}/\omega_{ji}$.

\begin{example}  
    Let $A$ be an evolution $K$-algebra with natural basis $\{u_i\}_{i=1}^3$ and product $u_1^2 = u_1+2 u_2 $, $u_2^2 = -u_2-u_3 $, and $u_3^2 = 2u_3-8u_1$. The graph associated to this evolution algebra is
\begin{center}
\hbox{
\begin{tikzcd}[row sep=1.5cm, column sep=1cm]
	& \bullet u_1 \\
	\bullet u_3 && \bullet u_2
	\arrow[from=1-2, to=1-2, loop, in=55, out=125, distance=12mm]
	\arrow[from=1-2, to=2-3]
	\arrow[from=2-1, to=1-2]
	\arrow[from=2-1, to=2-1, loop, in=145, out=215, distance=12mm]
	\arrow[from=2-3, to=2-1]
	\arrow[from=2-3, to=2-3, loop, in=325, out=35, distance=12mm]
\end{tikzcd}\hskip 1cm
$\begin{matrix}\omega_{11}=1, & \omega_{21}=2,\\
\omega_{22}=-1, & \omega_{32}=-1,\\
\omega_{13}=-8, & \omega_{33}=2.
\end{matrix}$
}
\end{center}
\noindent The group $\diag(A)$ is trivial (we do not specify the basis in the notation, since the algebra is perfect). On the other hand, the graph has cyclic symmetries of type $\s=(123)$ and $\sigma^2=(132)$. For $\s=(123)$ we have 
$$\begin{matrix}
k_{11}=-1 & k_{22}=-2 & k_{33}=1/2\\
k_{21}=-1/2 & k_{32}=8 & k_{13}=-1/4
\end{matrix}$$
The action of the functor $\F_\s$ is summarized in the diagram below.

\[\begin{tikzcd}[column sep = small]
	&& K^{\times}\\
	\\
	K^{\times} &&&& K^{\times}
	\arrow[from=1-3, to=1-3, loop, in=55, out=125, distance=10mm, "-s"]
	\arrow[from=1-3, to=3-5, "-\frac{1}{2}s"]
	\arrow[from=3-1, to=1-3, "-\frac{1}{4}s"]
	\arrow[from=3-1, to=3-1, loop, in=145, out=215, distance=10mm, "\frac{1}{2}s"]
	\arrow[from=3-5, to=3-1, " 8s"]
	\arrow[from=3-5, to=3-5, loop, in=325, out=35, distance=10mm, "-2s"]
\end{tikzcd}\]
 Now, we have to compute triples $(x_1,x_2,x_3)$
 such that $k_{j,i}s(x_i) = x_j$,
 That is:
  \begin{equation*}
      \begin{array}{cc}
          -x_1^2=x_1 ,& -\frac{1}{2}x_1^2=x_2, \\
           -2x_2^2=x_2, & 8x_2^2=x_3, \\
           \frac{1}{2}x_3^2=x_3, & -\frac{1}{4}x_3^2=x_1.
      \end{array}
  \end{equation*}
  
  Thus, the first three equations on the left side  give
 $x_1=-1$, $x_2=-1/2$, and $x_3=2$. These solutions are compatible with the remaining equations.  From this it follows that $\L\F_\s=\left \{(-1,-\frac{1}{2},2)\right \}$, which can be seen as an automorphism such that $u_1\mapsto -u_2$, $u_2\mapsto -\frac{1}{2}u_3$, and $u_3\mapsto 2u_1$. Using the matrix representation of the automorphisms and item (iii) of Proposition~\ref{umrof}, we can make the identifications  
 $$\L\F_\s\cong\left\{\begin{pmatrix*}[r]
 0 & 0  & 2\\
 -1 & 0 & 0\\
 0 & -1/2 & 0
\end{pmatrix*}\right\},
\quad 
\L\F_{\s^2}\cong\left\{
\begin{pmatrix*}[r]
0 & -1  & 0\\0 & 0 & -2\\1/2 & 0 & 0
\end{pmatrix*}\right\}.$$
Notice that $\L\F_1=\diag(A)=\{1\}$. Thus, $\aut(A)=\{1\}\sqcup\L\F_\s\sqcup\L\F_{\s^2}$.
 \end{example}

\begin{remark}  
    Since the algebra $A$ above is perfect and its associated graph is finite, $\diag(A)$ and $\aut(A)$  can also be computed using the methods presented in \cite{Elduque}. 
    As for the following example, this is not the case for infinite-dimensional algebras.
\end{remark}

\begin{example}  

Consider $A$ an evolution $K$-algebra with $B=\{u_i\}_{i\in \Z}$ a natural basis and product $u_i^2=u_{i}+u_{i+1}$ for all $i \in \Z$. The associated graph $E$ is described below.
\[E:\begin{tikzcd}
	{} & \underset{u_{-i}}{\bullet}& \underset{u_{-1}}{\bullet} & \underset{u_{0}}{\bullet} & \underset{u_{1}}{\bullet} & \underset{u_{i}}{\bullet} & {}
	\arrow[no head, dotted, from=1-1, to=1-2]
	\arrow[ from=1-2, to=1-2, loop, in=55, out=125, distance=10mm]
	\arrow[no head, dotted,from=1-2, to=1-3]
	\arrow[from=1-3, to=1-3, loop, in=55, out=125, distance=10mm]
	\arrow[from=1-3, to=1-4]
	\arrow[from=1-4, to=1-4, loop, in=55, out=125, distance=10mm]
	\arrow[from=1-4, to=1-5]
	\arrow[from=1-5, to=1-5, loop, in=55, out=125, distance=10mm]
	\arrow[no head, dotted, from=1-5, to=1-6]
	\arrow[  from=1-6, to=1-6, loop, in=55, out=125, distance=10mm]
	\arrow[no head, dotted, from=1-6, to=1-7]
\end{tikzcd}\]
 Let us prove that $A$ is not perfect. Fix $u_j$ of the natural basis. Then, there are scalars $k_{l}$,  only finitely many of which are nonzero, such that we can write
 $$u_j=\cdots+k_{j-2}u_{j-2}^2+k_{j-1}u_{j-1}^2+k_{j}u_{j}^2+k_{j+1}u_{j+1}^2+\cdots=$$
 $$\cdots+k_{j-2}(u_{j-2}+u_{j-1})+k_{j-1}(u_{j-1}+u_j)+k_{j}(u_{j}+u_{j+1})+k_{j+1}(u_{j+1}+u_{j+2})+\cdots .$$
 So, we get the system
 $$\begin{cases}
 \vdots \\
     0=k_{j-3}+k_{j-2},\\
     0=k_{j-2}+k_{j-1},\\
     1=k_{j-1}+k_{j},\\
     0=k_{j}+k_{j+1},\\
     0=k_{j+1}+k_{j+2},\\
   \vdots\\  
 \end{cases}$$
and hence, for $n\in \N^*$, we have $k_{j-n}=\pm k_{j-1}$ and $k_{j+n}=\pm k_{j}$. However, since
$k_{j-1}+k_j=1$,  there exist infinitely many nonzero scalars $k_{j-n}$ or $k_{j+n}$, which is a contradiction. Observe that this non-perfect evolution algebra can be obtained as the direct limit of a sequence of perfect evolution algebras. Thus,  perfection is not preserved under direct limits. Moreover, as we have seen in Section \ref{sec_2LI}, this algebra satisfies the \LI  \hspace{0.1pc} condition. 

In this case, all weights are one and $\diag(A;B)=\{1\}$. Moreover, the graph \( E \) exhibits translational symmetry: applying the shift \( u_i \mapsto u_{i+1} \) and iterating this operation for arbitrary lengths yields the same graph. 

Denote by $\s\colon E\to E$ the automorphism mapping each $u_i$ to $u_{i+1}$. 

\[\begin{tikzcd}
	{} & \underset{K^\times}{\bullet} & \underset{K^\times}{\bullet} & \underset{K^\times}{\bullet}& \underset{K^\times}{\bullet} & \underset{K^\times}{\bullet}& {}
	\arrow[no head, dotted, from=1-1, to=1-2 ]
	\arrow[ from=1-2, to=1-2, loop, in=55, out=125, distance=10mm, "s"]
	\arrow[no head, dotted,from=1-2, to=1-3 ]
	\arrow[from=1-3, to=1-3, loop, in=55, out=125, distance=10mm, "s"]
	\arrow[from=1-3, to=1-4, "s"]
	\arrow[from=1-4, to=1-4, loop, in=55, out=125, distance=10mm, "s"]
	\arrow[from=1-4, to=1-5, "s"]
	\arrow[from=1-5, to=1-5, loop, in=55, out=125, distance=10mm, "s"]
	\arrow[no head, dotted, from=1-5, to=1-6]
	\arrow[  from=1-6, to=1-6, loop, in=55, out=125, distance=10mm, "s"]
	\arrow[no head, dotted, from=1-6, to=1-7]
\end{tikzcd}\]

It is straightforward to check that  $\L\F_\s=\{(x_i)_{i\in\Z}: x_i=1   \text{ for any } i\}$. Moreover, the automorphism represented by the only element in $\L\F_\s$ is given by $f(u_i)=u_{i+1}$ ($i\in\Z)$.
In this case, $$\aut(A,B)=\bigcup_{n\in\Z
}\L\F_{\s^n}\cong \Z.$$

The conclusion is that by Theorem \ref{ortauc}, up to isomorphism, $\aut(A)\supset\Z$ because $\aut(A)\supset\U\cong\Z$. 

\end{example}

\begin{remark}  
    The example above shows that perfection is not preserved under direct limits, as the algebra $A$ can be written as the direct limit of the algebras $A_n$, where $A_n$ is the perfect evolution algebra with basis $\{u_{-n}, \ldots u_n\}$ and product $u_i^2=u_i+u_{i+1}, i=-n,\ldots,n-1$, and $u_n^2 = u_n$.
\end{remark}

\begin{remark}\label{hoy}
  
   Let $A$ be an evolution algebra with natural basis $B$. If $u\in B$ is the basis of a loop and $T(u)$ its tree, then for any $v\in T(u)$ and $t\in\diag(A;B)$, we have $t(v)=v$.

\end{remark}

\begin{example}
Consider the complex evolution algebra \( A \) with natural basis \( B = \{u_i\}_{i \in \mathbb{N}} \) and multiplication given by  \[
u_0^2 = u_0 \quad \text{and} \quad u_i^2 = \lambda_i u_i + \mu_i u_0 \quad \text{for } i > 0,
\]
where \( \{\lambda_i\} \) and \( \{\mu_i\} \) are two sequences of nonzero complex numbers.

The graph associated with \( A \) (relative to the basis \( B \)) is the following.

\begin{center}

     \begin{tikzpicture}[shorten <=2pt,shorten >=2pt,>=latex, node distance={15mm}, main/.style = {draw, fill, circle, inner sep = 1pt}]

\def \n {6}
\def \radius {1.5cm}
\def \margin {4} 

\node[main] (12) at (0,0) {};
\foreach \s in {0,...,4}
{
  \node[main] (\s) at ({360/\n * \s}:\radius) { };
  \draw[->] (\s) -- (12);
}

\draw[->,loop above] (0) to (0);
\draw[->,loop above] (1) to (1);
\draw[->,loop above] (2) to (2);
\draw[->,loop above] (3) to (3);
\draw[->,loop below] (4) to (4);
\draw[->] (12) to [out= 270, in = 300, looseness=50] (12);

  \draw[dotted] ({360/\n * (5 - 1)+\margin}:\radius) 
    arc ({360/\n * (5 - 1)+\margin}:{360/\n * (\n)-\margin}:\radius);
\def \Radius {1.85cm}
\node (6) at ({360/6 * 0-10}:\Radius) {$u_{1}$};
\node (7) at ({360/6 * 1-10}:\Radius) {$u_{2}$};
\node (8) at ({360/6 * 2+10}:\Radius) {$u_{3}$};
\node (9) at ({360/6 * 3-8}:\Radius) {$u_{4}$};
\node (10) at ({360/6 * 4-10}:\Radius) {$u_{5}$};

\node (15) at (0,0.5) {$u_0$};
\end{tikzpicture}   
\end{center}

The structure matrix of $A$ relative to $B$ is 
\begin{equation*}
    \left (\begin{array}{ccccc}
        1 & \mu_1& \mu_2 & \mu_3 & \cdots \\
        0& \lambda_1 & 0 & 0 & \cdots \\
        0& 0 & \lambda_2 & 0 & \cdots \\
        0& 0 & 0 & \lambda_3 & \cdots \\
        \vdots & \vdots & \vdots & \vdots & \ddots 
    \end{array}\right ).
\end{equation*}

It is easy to check that this matrix is invertible, with inverse 

\begin{equation*}
    \left (\begin{array}{ccccc}
        1 & -\mu_1\lambda_1^{-1} & -\mu_2\lambda_2^{-1} & -\mu_3\lambda_3^{-1} & \cdots \\
        0& \lambda_1^{-1} & 0 & 0 & \cdots \\
        0 & 0 & \lambda_2^{-1} & 0 & \cdots \\
        0& 0 & 0 & \lambda_3^{-1} & \cdots \\
        \vdots & \vdots & \vdots & \vdots & \ddots 
    \end{array}\right ).
\end{equation*}
\noindent
With only this data, Remark~\ref{hoy} ensures that \(\diag(A) = \{1\}\). The question is: What other automorphisms does $\aut(A)$ have? Notice that any permutation of $\N$ fixing $0$ induces a symmetry of the graph. Let $\Sigma$ be the set of all such permutations, and let $\sigma \in \Sigma$. We have $\omega_{00}=1$,
$\omega_{0i}=\mu_i$, $\omega_{ii}=\lambda_i$ for any $i>0$, and the other structure constants are null.
So, for $i>0$ we have $k_{ii}:=\omega_{\s(i)\s(i)}/\omega_{ii}=\lambda_{\s(i)}/\lambda_i$,
while $k_{0i}=\omega_{0\s(i)}/\omega_{0i}=\mu_{\s(i)}/\mu_i$. Also $k_{00}=1$. If we focus on the arrow $a$ from $u_i$ to $u_0$ (again $i>0$), this induces
$k_{0i}s\colon K^\times\to K^\times$. The loop at $u_i$ gives $k_{ii}s\colon K^\times\to K^\times$, and the loop at $u_0$ gives the squaring map $s\colon K^\times\to K^\times$. So, diagrammatically, we have 

\[\begin{tikzcd}
	{\underset{K^\times}{\bullet}} & {\underset{K^\times}{\bullet}}
	\arrow[from=1-1, to=1-1, loop, in=145, out=215, distance=10mm,"k_{ii}s"]
	\arrow[from=1-1, to=1-2,"k_{0i}s"]
	\arrow[from=1-2, to=1-2, loop, in=325, out=35, distance=10mm, "s"]
\end{tikzcd}\]
and $\L\F_\s$ consists on those sequence $(x_i)_{i\geq 0}$
such that $k_{ii}x_i^2=x_i$, $k_{0i}x_i^2=x_0$, and $x_0^2=x_0$. Then, 
$x_0=1$ and $x_i=\lambda_i/\lambda_{\s(i)}=\sqrt{\mu_i/\mu_{\s(i)}}$. So, we must have 
\begin{equation}\label{laidar}
\lambda_i^2/\lambda_{\s(i)}^2=\mu_i/\mu_{\s(i)}.
\end{equation} 
If this condition is satisfied by a permutation $\s\in\Sigma$, then $\L\F_{\s}$ is the set whose unique element is the automorphism $t\colon A\to A$ such
that $t(u_0)=u_0$ and $t(u_i)=x_i u_{\s(i)}$, where $x_i=\lambda_i/\lambda_{\s(i)}$.
On the other hand, note that $\Sigma'$, the set of permutations $\s\in \Sigma$ satisfying \eqref{laidar}, is a group. Since the structure matrix of $A$ relative to $B$ is invertible, we have 
$\aut(A)=\sqcup_{\sigma \in \Sigma'}\L\F_\s$, which is isomorphic to the group of permutations fixing $0$ and satisfying \eqref{laidar}. If there is no permutation satisfying \eqref{laidar}, then $\aut(A)=\{1\}$. If, for instance, we have the multiplication table 
$$\begin{cases} u_0^2=u_0, & \\  u_i^2=\alpha \ 2^{i/3}u_i+\beta\ 4^{i/3}u_0, & (i>0),
\end{cases}$$
for some nonzero constants $\alpha$ and $\beta$, then
$\lambda_i=\alpha \ 2^{i/3}$ and $\mu_i=\beta \ 4^{i/3}$ and it is easy to see that $\mu_i=k \lambda_i^2$ for some nonzero $k$. So \eqref{laidar} is satisfied for any $\s$
and the group of automorphisms of $A$ is isomorphic to the group of permutations of $\N\setminus\{0\}$.

\end{example}

\section*{Acknowledgements} 
The authors are supported by the Spanish Ministerio de Ciencia, Innovaci\'on y Universidades through project  PID2023-152673NB-I00. The first, fourth, fifth and sixth authors are supported by the Junta de Andaluc\'{\i}a  through project FQM-336. These two projects  with FEDER funds. The second and third author were partially supported by Funda\c{c}\~ao de Amparo \`a Pesquisa e Inova\c{c}\~ao do Estado de Santa Catarina (Fapesc) - Brazil. Conselho Nacional de Desenvolvimento Cient\'ifico e Tecnol\'ogico (CNPq) - Brazil partially supported the third author. The fourth and fifth authors are partially funded by grant Fortalece 2023/03 of "Comunidad Autónoma de La Rioja". 
The fourth and fifth authors were supported by the Brazilian Federal Agency for Support and Evaluation of Graduate Education â Capes, with Process numbers: 88887.895676/2023-00 and 88887.895611/2023-00, respectively. The sixth author is supported by the Junta de Andalucía PID fellowship no. PREDOC\_00029.

Commutative diagrams were drawn using the tikzcd editor
\url{https://tikzcd.yichuanshen.de/} of Y. Shen

\bibliographystyle{acm}
\bibliography{ref}

\end{document}